\newtheorem{thm}{Theorem}[section]
\newtheorem{rmk}{Remark}[section]
\newtheorem{sch}{Scheme}[section]
\numberwithin{equation}{section}
\begin{document}
\title {Arbitrarily high-order energy-preserving schemes for the Zakharov-Rubenchik equations}

\author{ Gengen Zhang$^{1}$, Chaolong Jiang$^{2,3}$\footnote{Correspondence author. Email: Chaolong$\_$jiang@126.com}, Hao Huang$^{2,4}$\\
{\small$^1$ School of Mathematics and Statistics, Yunnan University, Kunming 650504,  China}
\\ 
{\small$^2$ School of Statistics and Mathematics,
Yunnan University of Finance and Economics, Kunming 650221, China}\\
{\small $^3$ Department of Mathematics, College of Liberal Arts and Science, National University of Defense Technology,}\\
{\small  Changsha 410073, China}\\
{\small $^4$ Basic Teaching Department, Guizhou Vocational Institute of Technology, Fuquan 550500, China} }

\date{}

\maketitle

\begin{abstract}
In this paper, we present a novel class of high-order energy-preserving schemes for solving the Zakharov-Rubenchik equations. The main idea of the scheme is first to introduce an quadratic auxiliary variable to transform the Hamiltonian energy into a modified quadratic energy and the original system is then reformulated into an equivalent system which satisfies the mass, modified energy as well as two linear invariants. The symplectic Runge-Kutta method in time, together with the Fourier pseudo-spectral method in space is employed to compute the solution of the reformulated system. The main benefit of the proposed schemes is that it can achieve arbitrarily high-order accurate in time and conserve the three invariants: mass,  Hamiltonian energy and two linear invariants. In addition, an efficient fixed-point iteration is proposed to solve the resulting nonlinear equations of the proposed schemes. Several experiments are addressed to validate the theoretical results.
\end{abstract}

\textbf{AMS subject classification:} 65M06, 65M70\\[2ex]
\textbf{Keywords:}
         Zakharov-Rubenchik equations;
        energy-preserving scheme;
		symplectic Runge-Kutta method;
		quadratic auxiliary variable approach;
		Fourier pseudo-spectral method.

\section{Introduction}

The Zakhrov-Rubenchik  (ZR) equations are applied extensively to describe the interaction and the dynamics of spectrally narrow high-frequency waves with low-frequency (acoustic) waves \cite{ZR72}.
%\begin{equation}\label{ZR2}
%\left\{
%\begin{split}
%& {\rm i} B_{t}  + \omega   B_{xx} - \kappa \big( u - \frac{1}{2}\nu \rho + q|B|^2 \big) B   =0  , \\
%&\theta  \rho_{t}  +  \big( u - \nu \rho \big)_x = - \kappa  (|B|^2)_x  ,\\
%&\theta  u_{t}  +  \big( \beta \rho - \nu u \big)_x = \frac{1}{2}\kappa \nu  (|B|^2)_x , \  {x} \in \mathbb{R},\  t>0,
% \end{split}\right.
%\end{equation}
In this paper, we consider the following ZR equations
\begin{equation}\label{ZR1}
\left\{
\begin{split}
& {\rm i} \partial_tB(x,t)  + \omega  \partial_{xx}B(x,t) - \kappa \big( u(x,t) - \frac{1}{2}\nu \rho(x,t) + q|B(x,t)|^2 \big) B(x,t)   =0  , \\
& \partial_t\rho(x,t)  +  \partial_x\big( u(x,t) - \nu \rho(x,t) \big) = - \kappa   \partial_x(|B(x,t)|^2)  ,\\
& \partial_tu(x,t)  +  \partial_x\big( \beta \rho(x,t) - \nu u(x,t) \big) = \frac{1}{2}\kappa \nu   \partial_x(|B(x,t)|^2), \ {x} \in \mathbb{R},\ t>0,
 \end{split}\right.
\end{equation}
with the initial conditions
\begin{align}
B({x}, 0) = B_0({x}), \   \rho({x}, 0) = \rho_0({x}),
\   u({x}, 0) = u_0({x}), \  {x} \in \mathbb{R},  \label{ZRcon}
\end{align}
where the complex-valued function $B:=B({x}, t)$ denotes the
magnetic field generated by the plasma, $\rho:=\rho({x}, t)$ and $u:=u({x},t)$ are the real-valued functions, which
represent the density of mass and the fluid speed, respectively. Moreover, $\omega$, $\kappa$, $\nu$, $\beta$ and $q$ are given real
constants where $\omega$ and $\kappa$ denote the frequency and the wave number, respectively, $\nu$ and $\beta$ are dimensionless parameters, and $q$ satisfies
\begin{align*}
q = \kappa + \frac{\nu(\kappa \nu -1 )}{4(\beta-\nu^2)}.
\end{align*}

The ZR equations \eqref{ZR1} conserve the following three invariants:
\begin{itemize}
\item Mass
\begin{align}\label{CONmass}
& \mathcal{M}(t)= \int_{\mathbb{R}} |B|^2 d{x}\equiv  \mathcal{M}(0),\ t\ge0.
\end{align}
\item Hamiltonian energy
\begin{align}\label{CONenergy}
\mathcal{H}(t) = \int_{\mathbb{R}} \Big( - \omega |B_x |^2 - \kappa \big( u - \frac{\nu}{2} \rho
      +\frac{q}{2}|B|^2 \big) | B|^2
    -\frac{\beta}{2} \rho^{2} -\frac{1}{2} u^{2} + \nu u \rho  \Big) d {x},\ t\ge 0.
    \end{align}
\item Two linear invariants
\begin{align}\label{CONI1}
& \mathcal{I}_1(t) = \int_{\mathbb{R}} \rho~ d{x}\equiv  \mathcal{I}_1(0),\  \mathcal{I}_2(t) = \int_{\mathbb{R}} ud{x}\equiv  \mathcal{I}_2(0),\  t\ge0.
\end{align}
\end{itemize}

Extensive theoretical studies have been carried out for the equations \eqref{ZR1} in the literature.
For the well-posedness and existence of the ZR equations \eqref{ZR1}, we refer to \cite{Linares09,Oliveira03pd,Oliveira15,Ponce05} and references therein.
In particular, Oliveira \cite{Oliveira03pd} not only proved the global well-posedness of the ZR equations \eqref{ZR1}
in the space $H^2(\mathbb{R})\times H^1(\mathbb{R})\times H^1(\mathbb{R})$, but also proved the existence and the orbital stability of the solitary wave solutions.
For the asymptotic behavior of solutions and the adiabatic limit of the ZR equations \eqref{ZR1}, we refer to \cite{Cordero16} and \cite{Oliveira08}, respectively for more details.

A numerical scheme that preserves one or more invariants of the original system is known as an energy-preserving scheme. It is shown in \cite{SV1986IMA,ZFPV95} that the non-energy-preserving scheme will produce nonlinear blow-up in the numerical simulation of soliton collisions. Thus, over the last decade, there are some literatures concerning the energy-preserving schemes for the ZR equations \eqref{ZR1}.
Zhao et al. \cite{Zhao14jsc} proposed a time-splitting Fourier pseudo-spectral (TS-FP) scheme and a
Crank-Nicolson finite difference (CN-FD) scheme, respectively. It is proven that the TS-FP scheme can conserve the mass \eqref{CONmass}, and the CN-FD scheme can
conserve three invariants \eqref{CONmass}-\eqref{CONI1}. Later on, the Crank-Nicolson Fourier pseudo-spectral (CN-FP) scheme is investigated and analysed in \cite{WangZhang19acm}. Meanwhile, Ji et al. \cite{JiZhang19ijcm} proposed a Crank-Nicolson compact finite difference scheme, which is proven rigorously in mathematics that the scheme is second-order accurate in time and fourth-order accurate in space, respectively. Nevertheless, all of existing energy-preserving schemes are second-order accurate in time. In \cite{GZW2020jcp,JCQSjsc2022}, numerical experiments show that the high-order energy-preserving schemes not only provide much smaller numerical error but also more robust than the second-order accurate schemes. Therefore, it is interesting to design high-order energy-preserving schemes for solving the ZR equations \eqref{ZR1}, which motives this paper.

In 1987, Cooper\cite{Cooper1987} showed that all RK methods conserve linear invariants and an irreducible RK
method can preserve all quadratic invariants if and only if their coefficients satisfies the specific algebraic condition. Then, Calvo et al. \cite{CIZ1997-MP} proved that no RK method can preserve arbitrary polynomial invariants of degree 3 or higher of arbitrary vector fields. Consequently, over the past few decades, there has been an increasing interest in higher-order energy-preserving methods for general conservative systems. The noticeable ones include high-order averaged vector field methods \cite{LWQ14,MHW2021jcp,QM08}, Hamiltonian Boundary Value Methods (HBVMs) \cite{BI16,BIT10}, energy-preserving continuous stage Runge-Kutta methods \cite{MB16,TS12,WJJSC2022}, functionally
fitted energy-preserving methods \cite{LWsina16,MB16} and energy-preserving variant of collocation methods \cite{CH11bit,H10} etc. However, it is worth noting that these mentioned methods can  conserve both the Hamiltonian energy and the linear invariant of the original system at most (see \cite{BBCIamc18,LW15}). Thus, how to design high-order numerical schemes which can conserve the three invariants \eqref{CONmass}-\eqref{CONI1} of the ZR equations \eqref{ZR1} is challenging.

Recently, inspire by the energy quadratization (EQ) approach \cite{SXY18,SXY19siamrev,YZW17}, Gong et al\cite{GongQuezheng21,GHWW2022} first present the idea of the quadratic auxiliary variable (QAV) approach for developing high-order energy-preserving scheme for polynomial Hamiltonian partial differential equations, which was revisited and generalized  more recently by Tapley \cite{Tapley-SISC2022}. The basic idea of the QAV approach and EQ approach is first to transform the original Hamiltonian energy into a modified quadratic energy by introducing appropriate auxiliary variables, and based on the energy variational principle, a reformulated system, which satisfies the quadratic energy, is then obtained. Finally, various energy-preserving schemes are obtained using the quadratic invariant conserving method to the reformulated system. The diffidence between them lies in that, on the one hand, the auxiliary variable introduced by the QAV approach shall be quadratic, and it usually may introduce multiple quadratic auxiliary variables for the polynomial Hamiltonian energy of high degree \cite{JQSZ2022-arXiv,Tapley-SISC2022}. On the other hand, the developed high-order scheme based on the QAV approach can conserve the original energy of the general polynomial Hamiltonian system in stead of a modified one \cite{CWJ2021cpc,JWG19}. In this paper, based on the idea of the QAV approach, we propose a novel class of high-order energy-preserving scheme, which can conserve the three invariants \eqref{CONmass}-\eqref{CONI1} of the ZR equations \eqref{ZR1} exactly. This is known to be the first temporal high-order multiple invariants conserving scheme for the ZR equations \eqref{ZR1} in the literature and more interesting is that the classical CN-FP scheme \cite{WangZhang19acm} is a special case of our proposed methods.

The rest of this paper is organized as follows.
In section \ref{Sec-ZR-2}, based on the idea of the QAV approach, we first reformulate the original ZR equations \eqref{ZR1} into an equivalent system. In section \ref{Sec-ZR-3}, we present a class of high-order numerical schemes which can conserve the three invariants \eqref{CONmass}-\eqref{CONI1} of the ZR equations \eqref{ZR1}. An efficient implementation for solving the nonlinear equations of the proposed scheme is presented in section \ref{Sec-ZR-4}.
In section \ref{Sec-ZR-5}, extensive numerical tests and comparisons are carried out to illustrate
the performance of the proposed schemes. Section \ref{Sec-ZR-6} includes the conclusions of this paper.

\section{Model reformulation}\label{Sec-ZR-2}
Based on the idea of the QAV approach, the ZR equations \eqref{ZR1} are first reformulated into an equivalent form, which provides an elegant platform for efficiently
developing arbitrarily high-order energy-preserving schemes. Due to the rapid decay of the solution at the far field \cite{Linares09,Oliveira03pd},
the ZR equations \eqref{ZR1} are truncated on a bounded interval $\Omega=[a,b]$ with a periodic boundary condition. In addition, for any $f, g\in L^2(\Omega)$, we define the continuous inner product as $(f,g) = \int_{\Omega} f \bar{g} \mathrm{d} {x}$, in which $\bar{g}$ represents the conjugate of $g$, and the $L^2$-norm is denoted as $\|f\|^2 = (f,f)$.

To be the start, the ZR equations \eqref{ZR1} can be written as
\begin{align}
\begin{split}\label{ZR-Hamiltonian}
\frac{\partial z}{\partial t}
= \mathcal{D}~ \frac{\delta \mathcal{H}}{\delta \bar{z}},
  \end{split}\ \mathcal{D}=  \left(\begin{array}{ccc}
{\rm i}  &  0 & 0 \\
0 & 0 & \partial_x \\
0 & \partial_x & 0 \\
\end{array} \right),
 \end{align}
where $z=(B,\rho,u)^T $ and $ \frac{\delta \mathcal{H}}{\delta \bar{z}} $ is the variational derivative of the Hamiltonian functional \eqref{CONenergy}.
%\begin{equation}\label{Hamilton-energy}
%\mathcal{H}[z] = \int_{\Omega} \Big( - \omega |B_x |^2 - \kappa \big( u - \frac{\nu}{2} \rho
%      +\frac{q}{2}|B|^2 \big) | B|^2
%    -\frac{\beta}{2} \rho^{2} -\frac{1}{2} u^{2} + \nu u \rho  \Big) d {x}.
%\end{equation}
Then motivated by \cite{GongQuezheng21,JWG19}, we introduce an quadratic auxiliary variable as
\begin{equation}\label{auxiliary-variable}
	\phi:=\phi(x,t)=|B|^2.
\end{equation}
Then, the Hamiltonian energy functional \eqref{CONenergy} is reformulated into the following quadratic form
\begin{equation}\label{modified-energy}
	\mathcal{E}(t) =  \int_{\Omega} \Big( -\omega |B_x |^2 - \kappa \big( u - \frac{\nu}{2} \rho
      +\frac{q}{2}\phi \big) \phi
    -\frac{\beta}{2} \rho^{2} -\frac{1}{2} u^{2} +\nu u \rho  \Big) d {x}.
\end{equation}
 According to the energy variational principle, the system \eqref{ZR-Hamiltonian} can be transformed into an equivalent formulation, as follows:
\begin{equation}\label{QAVsystem}
	\begin{cases}
 & B_{t} = {\rm i} \big( \omega B_{xx} - \kappa \big( u - \frac{1}{2}\nu \rho + q \phi \big) B \big), \\
&  \rho_{t}  =    \partial_{x} \big( -u+ \nu \rho - \kappa  \phi \big)  ,\\
&  u_{t}  =   \partial_{x} \big( - \beta \rho + \nu u  + \frac{1}{2}\kappa \nu \phi \big),\\
&  \phi_t = 2 {\rm Re} (\bar{B}\cdot B_t),\ {x}\in \Omega,\ t\ge 0,
	\end{cases}
\end{equation}
where ${\rm Re}(\bullet)$ denotes the real part of $\bullet$,
and the consistent initial conditions are given by
\begin{equation}\label{CIC}
\begin{array}{lll}
B ({x},0) = B_0({x}), \   \rho ({x},0) = \rho_0({x}),
\  u({x},0) = u_0({x}), \
\phi({x},0) = \big| B_0({x})\big|^2,\ {x}\in \Omega.
\end{array}
\end{equation}
%with the variational derivatives
%\begin{align*}
%&\frac{\delta \mathcal{E}}{\delta \bar{B}} = \omega B_{xx} - \kappa \Big( u - \frac{1}{2}\nu \rho + q \phi \Big) B , \\
%&\frac{\delta \mathcal{E}}{\delta \rho} = - \beta \rho + \nu u  + \frac{1}{2}\kappa \nu  \phi, \\
%&\frac{\delta \mathcal{E}}{\delta u} = -u + \nu \rho - \kappa  \phi  .
% \end{align*}

%Now, we turn to  present the multiple invariants properties of the reformulated system \eqref{QAVsystem}-\eqref{CIC}.
\begin{thm}\label{ZR-thm-2.1} With the periodic boundary condition, the system \eqref{QAVsystem}-\eqref{CIC} conserves the invariants
	\begin{align}\label{QAVCLphi}
		&H_1(x,t)=H_1(x,0)=0,\ H_1(x,t)=\phi({x},t)- |B({x},t)|^{2},\ {x}\in \Omega,\; t\ge 0
	\end{align}
and the modified quadratic energy \eqref{modified-energy}.
\end{thm}

\begin{proof}
It follows from the last equality of \eqref{QAVsystem} that
\begin{equation*}
\partial_tH_1(x,t)=\partial_t(\phi({x},t)-|B({x},t)|^2)=\partial_t\phi({x},t)-2{\rm Re}\Big(\bar{B}({x},t)\cdot\partial_tB({x},t)\Big)=0.
\end{equation*}
Thus, with the initial condition \eqref{CIC}, the equation \eqref{QAVCLphi} holds for any ${x}\in \Omega$ and $t\ge0$.

Subsequently, with the periodic boundary condition, we can deduce that
	\begin{align*}
		\frac{\mathrm{d}}{\mathrm{d}t}\mathcal{E}(t)
&=2{\rm Re}(\omega B_{xx},B_t)-\kappa\big(u-\frac{1}{2}\nu\rho+q\phi,\phi_t\big)+\big(-\beta\rho+\nu u+\frac{1}{2}\kappa\nu\phi,\rho_t\big)+\big(-u+\nu\rho-\kappa\phi,u_t\big).
\end{align*}
Using \eqref{QAVsystem} and the periodic boundary condition, we can obtain from the above equation
\begin{align*}
\frac{\mathrm{d}}{\mathrm{d}t}\mathcal{E}(t)&=2{\rm Re}\big(\omega B_{xx}-\kappa\big(u-\frac{1}{2}\nu\rho+q\phi\big)B,B_t\big)
         +\big(-\beta\rho+\nu u+\frac{1}{2}\kappa\nu\phi,\partial_{x}\big( -u+ \nu \rho-\kappa\phi\big)\big)\\
         &~~~~+\big(-u+\nu\rho-\kappa\phi,\partial_{x}\big(-\beta\rho+\nu u+\frac{1}{2}\kappa\nu\phi\big)\big) \\
		&=-2{\rm Re}\big({\rm i}B_t,B_t\big)+\big( - \beta \rho+\nu u+\frac{1}{2}\kappa\nu\phi,\partial_{x}\big(-u+\nu\rho-\kappa\phi\big)\big)\\
&~~~~-\big(-\beta\rho+\nu u+\frac{1}{2}\kappa\nu\phi,\partial_{x}\big(-u+\nu\rho-\kappa\phi\big)\big)\\
     &= 0.
\end{align*}
This completes the proof.
\end{proof}
%Based on

%\begin{rmk}
%We should note that the system \eqref{QAVsystem}-\eqref{CIC} satisfy two quadratic strong invariants, namely,
%the auxiliary variable relation \eqref{QAVCLphi} and the modified energy \eqref{QAVECL}.
%Furthermore,  substituting the auxiliary variable relation \eqref{auxiliary-variable} into \eqref{QAVECL} leads to the original energy conservation law \eqref{CONenergy}.
%\end{rmk}
\begin{thm}\label{ZR-thm-2.2} Under the periodic boundary condition, the system \eqref{QAVsystem}-\eqref{CIC} also conserves the mass \eqref{CONmass} and the two linear invariants \eqref{CONI1}.
%\begin{align}
%& \mathcal M(t) \equiv \mathcal M(0), \   t\ge 0,\label{QAVMCL}\\
%        & \mathcal I_1(t) \equiv \mathcal I_1(0), \
%           \mathcal I_2(t) \equiv \mathcal I_2(0),  \   t\ge 0,\label{QAVtCL}\\
%\end{align}
%where $\mathcal{M}(t)$, $\mathcal{I}_1(t)$, $\mathcal{I}_2(t)$ are given by \eqref{CONmass} and \eqref{CONI1}, respectively.
\end{thm}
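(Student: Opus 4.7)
The plan is to differentiate each invariant in time, substitute the equations of \eqref{QAVsystem}, and use the periodic boundary conditions.

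For the two linear invariants I would start with the second and third equations of \eqref{QAVsystem}: $\rho_t$ and $u_t$ are already written as spatial derivatives of smooth fluxes,
\begin{equation*}
\rho_t = \partial_x\bigl(-u + \nu\rho - \kappa\phi\bigr), \qquad u_t = \partial_x\bigl(-\beta\rho + \nu u + \tfrac{1}{2}\kappa\nu\phi\bigr).
\end{equation*}
Integrating over $\Omega=[a,b]$ and invoking the periodic boundary condition makes the right-hand side vanish, so $\frac{d}{dt}\mathcal{I}_1(t)=\frac{d}{dt}\mathcal{I}_2(t)=0$ and the two linear invariants are preserved.

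For the mass, I would compute $\frac{d}{dt}\mathcal{M}(t) = \frac{d}{dt}(B,B) = 2\mathrm{Re}(B_t,B)$ and substitute the first equation of \eqref{QAVsystem}, which reads $B_t = \mathrm{i}\omega B_{xx} - \mathrm{i}\kappa V B$ with $V := u - \tfrac{1}{2}\nu\rho + q\phi$. The key observation is that $u$, $\rho$ and $\phi$ are real-valued, so $V$ is real. Then
\begin{equation*}
(B_t,B) = \mathrm{i}\omega (B_{xx},B) - \mathrm{i}\kappa\!\int_\Omega V|B|^2\,\mathrm{d}x.
\end{equation*}
One integration by parts (with periodic boundary terms vanishing) gives $(B_{xx},B) = -\|B_x\|^2 \in \mathbb{R}$, and $\int_\Omega V|B|^2\,\mathrm{d}x \in \mathbb{R}$ by reality of $V$. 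Taking real parts and using $\mathrm{Re}(\mathrm{i}z)=-\mathrm{Im}(z)$ therefore yields $\mathrm{Re}(B_t,B)=0$, hence $\frac{d}{dt}\mathcal{M}(t) = 0$.

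There is no substantive obstacle here: the argument is essentially the same as the one for the original system \eqref{ZR1}. The only thing to notice is that the QAV substitution $\phi = |B|^2$ does not interfere with the standard conservation arguments, because $\phi$ enters the $B$-equation only through the real coefficient $V$ and enters the $(\rho,u)$-equations only through fluxes. The only mild care needed is to avoid circular reasoning by not appealing to $\phi = |B|^2$ during the computation, which is why the cancellation for the mass is driven purely by reality of $\phi$ (guaranteed by \eqref{QAVCLphi} and the real initial datum $\phi(x,0)=|B_0(x)|^2$) rather than by its explicit form.
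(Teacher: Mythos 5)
Your proposal is correct and follows essentially the same route as the paper: the linear invariants follow by integrating the flux-form equations for $\rho_t$ and $u_t$ over the periodic domain, and the mass follows by computing $2\,\mathrm{Re}(B_t,B)$, integrating by parts, and observing that the resulting integrand is $\mathrm{i}$ times a real quantity. Your extra remark that the reality of $\phi$ (rather than the identity $\phi=|B|^2$) is what drives the cancellation is a sound clarification, but it does not change the argument.
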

\begin{proof} We make the inner product of the first equation of \eqref{QAVsystem} with $2B$ and take the real
part of the resulting equation to obtain
\begin{align*}
\frac{d}{dt}\int_{\Omega}|B|^2dx=0,
\end{align*}
which implies $\frac{d}{dt}\mathcal{M}(t)=0,\ t\ge0$.

With noting the periodic boundary condition and the second and third equations of \eqref{QAVsystem}, we have, respectively,
\begin{align*}
& 	\frac{\mathrm{d}}{\mathrm{d}t}\int_{\Omega}\rho dx
= \int_{\Omega}\partial_{x}\big(-u+\nu\rho-\kappa\phi\big) d{x}=0,
\end{align*}
and
\begin{align*}
\frac{\mathrm{d}}{\mathrm{d}t}\int_{\Omega}u dx
= \int_{\Omega}\partial_{x}\big(-\beta\rho+\nu u+\frac{1}{2}\kappa\nu\phi\big)d{x}=0,
\end{align*}
which implies that the system \eqref{QAVsystem} conserves the two linear invariants \eqref{CONI1}. This completes the proof.
\end{proof}
\section{High-order energy-preserving scheme}\label{Sec-ZR-3}

In this section, we will propose a novel class of high-order energy-preserving schemes,
which are based on the Fourier pseudo-spectral method in space and the symplectic RK method in time for the reformulated system \eqref{QAVsystem}, respectively. We first apply the Fourier pseudo-spectral method to the system \eqref{QAVsystem} in space and a conserving semi-discrete system is given in Section \ref{ZR-FPM} and the fully discrete scheme is then proposed in Section \ref{ZR-fully-scheme}.

\subsection{Spatial semi-discretization}\label{ZR-FPM}
Let $\Omega_{h}=\{x_{j}|x_{j}=a+jh,\ 0\leq j\leq N\}$ be a partition of $\Omega$ with spatial mesh size $h= (b-a)/N$, where $N$ is a positive even number. Let $U_j$ be the numerical approximation of $U(x_j,t)$ for $j=0,1,\cdots,N$; denote $\mathbb{V}_{h}=\big\{{\bm U}|{\bm U}=(U_{0},U_{1},\cdots,U_{N-1})^{T}\big\}$ be the space of mesh functions on $\Omega_h$ that satisfies the periodic boundary condition.
Define the discrete inner product and $L^2$- and $L^{\infty}$-norms, respectively, as
\begin{align*}
&\langle {\bm U},{\bm V}\rangle_{h}=h\sum_{j=0}^{N-1}U_{j}\bar{V}_{j},\
\|\bm U \|_{h} =  \langle {\bm U},{\bm U}\rangle_{h}^{\frac{1}{2}},  \
\|{\bm U}\|_{h,\infty}=\max\limits_{0\le j\le N-1}|U_{j}|, \  \forall\  {\bm U},{\bm V}\in\mathbb{ V}_{h}.
\end{align*}
We also denote $`\cdot$' as the componentwise product of vectors ${\bm U},{\bm V}\in\mathbb{V}_h$, i.e.,
\begin{align*}
{\bm U}\cdot {\bm V}=&\big(U_{0}V_{0},U_{1}V_{1},\cdots,U_{N-1}V_{N-1}\big)^{T}.
\end{align*}
For brevity, we denote ${U}\cdot U$ and ${U}\cdot \bar{U}$ as $U^2$ and $|U|^2$, respectively.

Let $g_{j}(x)$ be the interpolation basis function given by
\begin{align*}
  &g_{j}(x)=\frac{1}{N}\sum_{l=-N/2}^{N/2}\frac{1}{c_{l}}e^{\text{i}l\mu (x-x_{j})},\ c_{l}=\left \{
 \aligned
 &1,\ |l|<\frac{N}{2},\\
 &2,\ |l|=\frac{N}{2},
 \endaligned
 \right.
\end{align*}
where $\mu=\frac{2\pi}{b-a}$.
Then, we denote
\begin{align*}
S_{N}=\text{span}\{g_{j}(x),\ 0\leq j\leq N-1\}
\end{align*}
as the interpolation space, and the interpolation operator $I_{N}: C(\Omega)\to S_{N}$ is then defined as \cite{CQ01}
\begin{align*}
I_{N}U(x,t)=\sum_{j=0}^{N-1}U_{j}(t)g_{j}(x),
\end{align*}
where $U_{j}(t)=U(x_{j},t)$. Taking the partial derivative with respect to $x$, and we then evaluate the resulting expression at the collocation point $x_{j}$ to obtain
\begin{align*}
\frac{\partial^{m} I_{N}U(x_{j})}{\partial x^{m}}
&=\sum_{k=0}^{N-1}U_{k}\frac{d^{m}g_{k}(x_{j})}{dx^{m}}=({\mathcal D}_{m}{\bm U})_{j},\ {\bm U}\in\mathbb{V}_h,
\end{align*}
where $j=0,\cdots,N-1$ and ${\mathcal D}_{m}$ is an $N\times N$ matrix with elements given by
\begin{align*}
({\mathcal D}_{m})_{j,k}=\frac{d^{m}g_{k}(x_{j})}{dx^{m}},\ j,k=0,1,\cdots,N-1.
\end{align*}
Especially, it holds for ${\mathcal D}_{1}$ and ${\mathcal D}_{2}$ that
\begin{align}
&{\mathcal D}_{1}=\mathcal{ F}_{N}^{H}\Lambda^1\mathcal{F}_{N},\ \Lambda^1=\text{i}\mu\cdot\text{diag}\big(0,1,\cdots,\frac{N}{2}-1,0,-\frac{N}{2}+1,\cdots,-1\big), \label{SP-D1}\\
&{\mathcal D}_{2}=\mathcal{F}_{N}^{H}\Lambda^2\mathcal{F}_{N},\ \Lambda^2=\big[\text{i}\mu\cdot\text{diag}(0,1,\cdots,\frac{N}{2}-1,\frac{N}{2},-\frac{N}{2}+1,\cdots,-1)\big]^2, \label{SP-D2}
\end{align}
 where  $\mathcal{F}_{N}$ is the discrete Fourier transform matrix with elements
$\big(\mathcal{F}_{N}\big)_{j,k}=\frac{1}{\sqrt{N}}e^{-\text{\rm i}jk\frac{2\pi}{N}},$ $\mathcal{F}_{N}^{H}$ is the conjugate transpose matrix of $\mathcal{F}_{N}$. For more details, please refer to \cite{GCW14CiCP,ST06}.

Applying the Fourier pseudo-spectral method to approximate the system \eqref{QAVsystem} in space yields
\begin{align}\label{QAVsystem-SP}
\left\{
\begin{aligned}
&\frac{d}{dt} {\bm B }={\rm i}\big(\omega {\mathcal D}_{2}{\bm B}-\kappa\big({\bm u}-\frac{1}{2}\nu {\bm \rho}+q{\bm\phi}\big)\cdot{\bm B}\big), \\
&\frac{d}{dt} {\bm \rho}= {\mathcal D}_{1}\big(-{\bm u}+\nu{\bm \rho}-\kappa{\bm \phi}\big),\\
&\frac{d}{dt} {\bm u}={\mathcal D}_{1}\big(-\beta {\bm \rho}+\nu{\bm u}+\frac{1}{2}\kappa\nu{\bm\phi}\big),\\
&\frac{d}{dt} {\bm \phi}=2{\rm Re}({\bm B}_t\cdot\bar{{\bm B}}),
	\end{aligned}
\right.
\end{align}
where ${\bm B}, {\bm u}, {\bm \rho}, {\bm \phi}\in\mathbb{V}_h$. Then, we prove that the semi-discrete system \eqref{QAVsystem-SP} conserves the semi-discrete invariants of Theorems \ref{ZR-thm-2.1} and \ref{ZR-thm-2.2}.

   \begin{thm} The semi-discrete system \eqref{QAVsystem-SP} conserves the following semi-discrete invariants
  \begin{itemize}
\item Mass
  \begin{align}
  \mathcal M_h(t) = \mathcal M_h(0),\ \mathcal{M}_h=\langle|{\bm B}|^2,{\bm 1}\rangle_h,\ {\bm B} \in\mathbb{V}_h,\ t\ge0,
  \end{align}
  where ${\bm 1}=[1,1,\cdots,1]^T\in\mathbb{R}^{N}.$
  \item Two quadratic invariants
   \begin{align}
\ {\bm H}_1(t)={\bm H}_1(0),\ \mathcal{E}_h(t) = \mathcal{E}_h(0),\ {\bm H}_1(t) \in\mathbb{V}_h,\ t\ge0,
  \end{align}
  where
  \begin{align*}
  &{\bm H}_1(t)=\bm{\phi}(t)-|{\bm B}(t)|^{2}, \\
  & \mathcal{E}_h(t) = \omega \langle{\mathcal D}_{2} {\bm B} , {\bm B} \rangle_h - \kappa \langle{\bm u} - \frac{\nu}{2} {\bm \rho}
      + \frac{q}{2} {\bm \phi} , {\bm \phi} \rangle_h - \frac{\beta}{2} \langle{\bm \rho} ,{\bm \rho}\rangle_h
      -\frac{1}{2}\langle{\bm u}, {\bm u}\rangle_h +\nu\langle{\bm u}, {\bm \rho}\rangle_h.
      \end{align*}
  \item Two linear invariants
  \begin{align}
  &\mathcal I_{1h}(t) = \mathcal I_{1h}(0), \ \mathcal I_{2h}(t) = \mathcal I_{2h}(0),\ t\ge0,
  \end{align}
  where
  \begin{align*}
  \mathcal{I}_{1h}=\langle{\bm \rho} ,{\bm 1} \rangle_h,\ \mathcal{I}_{2h}=\langle{\bm u} ,{\bm 1} \rangle_h,\ {\bm \rho},{\bm u}\in\mathbb{V}_h.
  \end{align*}
\end{itemize}
   \end{thm}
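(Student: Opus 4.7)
The plan is to mirror the two continuous-level proofs from the previous theorems, translating integration by parts into discrete summation by parts through the spectral properties of the differentiation matrices. The key preliminary observation, read off directly from \eqref{SP-D1}--\eqref{SP-D2}, is that $\mathcal{D}_2$ is Hermitian with respect to $\langle\cdot,\cdot\rangle_h$ (since $\Lambda^2$ is a real diagonal matrix) while $\mathcal{D}_1$ is skew-Hermitian (since $\Lambda^1$ is purely imaginary), giving $\langle \mathcal{D}_1 \bm{V},\bm{W}\rangle_h=-\langle \bm{V},\mathcal{D}_1 \bm{W}\rangle_h$ and $\langle \mathcal{D}_2 \bm{V},\bm{W}\rangle_h=\langle \bm{V},\mathcal{D}_2 \bm{W}\rangle_h$ for all $\bm{V},\bm{W}\in\mathbb{V}_h$. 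I would also record that $\mathcal{D}_1 \bm{1}=\bm{0}$, since its Fourier symbol vanishes at wavenumber zero, so that $\langle \mathcal{D}_1\bm{V},\bm{1}\rangle_h=0$ for every $\bm{V}$.

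From these ingredients, the linear invariants and $\bm{H}_1$ are immediate. Pairing the second and third equations of \eqref{QAVsystem-SP} with $\bm{1}$ and using $\langle \mathcal{D}_1\cdot,\bm{1}\rangle_h=0$ yields $d\mathcal{I}_{1h}/dt=d\mathcal{I}_{2h}/dt=0$. For $\bm{H}_1$, I would differentiate its defining identity componentwise and substitute the fourth equation of \eqref{QAVsystem-SP}, exactly as in the continuous proof. For the mass, computing $\frac{d}{dt}\mathcal{M}_h=2\mathrm{Re}\langle \dot{\bm{B}},\bm{B}\rangle_h$ and inserting the first equation reduces the right-hand side to $2\mathrm{Re}$ of $\mathrm{i}$ times a real scalar: $\omega \langle \mathcal{D}_2\bm{B},\bm{B}\rangle_h$ is real by Hermiticity of $\mathcal{D}_2$, and $\kappa\langle(\bm{u}-\tfrac{\nu}{2}\bm{\rho}+q\bm{\phi})\cdot\bm{B},\bm{B}\rangle_h$ is real because the real-valued componentwise factor $\bm{u}-\tfrac{\nu}{2}\bm{\rho}+q\bm{\phi}$ is paired with the real vector $|\bm{B}|^2$.

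The quadratic energy $\mathcal{E}_h$ is the main case, and here I would transcribe the continuous proof line by line, with integrals replaced by $\langle\cdot,\cdot\rangle_h$. Differentiating $\mathcal{E}_h$, using Hermiticity of $\mathcal{D}_2$ to move one derivative onto $\dot{\bm{B}}$, and grouping terms should produce
\begin{align*}
\frac{d\mathcal{E}_h}{dt}&=2\mathrm{Re}\big\langle \omega \mathcal{D}_2\bm{B}-\kappa(\bm{u}-\tfrac{\nu}{2}\bm{\rho}+q\bm{\phi})\cdot\bm{B},\dot{\bm{B}}\big\rangle_h\\
&\quad+\big\langle -\beta\bm{\rho}+\nu\bm{u}+\tfrac{1}{2}\kappa\nu\bm{\phi},\dot{\bm{\rho}}\big\rangle_h+\big\langle -\bm{u}+\nu\bm{\rho}-\kappa\bm{\phi},\dot{\bm{u}}\big\rangle_h.
\end{align*}
Substituting the first equation of \eqref{QAVsystem-SP} turns the first line into $-2\mathrm{Re}\langle \mathrm{i}\dot{\bm{B}},\dot{\bm{B}}\rangle_h$, which vanishes because $\mathrm{i}|\dot{\bm{B}}|^2$ is purely imaginary; substituting the second and third equations and applying skew-Hermiticity of $\mathcal{D}_1$ to one of the two resulting pairings makes the remaining terms cancel. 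The only real obstacle is careful bookkeeping of the complex conjugate in the second slot of $\langle\cdot,\cdot\rangle_h$, so that real-valued componentwise factors such as $\bm{\phi}$, $\bm{\rho}$, $\bm{u}$ can be moved between the two arguments without sign changes; once this is settled, every step of the continuous energy proof transfers verbatim.
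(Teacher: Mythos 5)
Your proposal is correct and matches the paper's intent exactly: the paper omits this proof with the remark that it is ``similar to that of Theorem 2.1,'' and your plan supplies precisely the missing discrete ingredients (Hermiticity of $\mathcal{D}_2$, skew-Hermiticity of $\mathcal{D}_1$, and $\mathcal{D}_1\bm{1}=\bm{0}$, all read off from \eqref{SP-D1}--\eqref{SP-D2}) needed to transcribe the continuous argument term by term.
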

   \begin{proof}
 The proof is similar to that of Theorems \ref{ZR-thm-2.1} and \ref{ZR-thm-2.2}, thus we omit it here for brevity.
   \end{proof}

\subsection{Full discretization}\label{ZR-fully-scheme}

Let $t_n = n \tau$ and  $t_{ni} = n \tau +c_i \tau$,  $n=0,1,2,\cdots $, $i=1,2,\cdots, s$,
where $\tau$ is the time step. Let $W_j^n$ and $(W_{ni})_j$ be the numerical approximation to the function $W(x,t)$ at points  $(x_j,t_n)$ and $(x_j,t_{ni})$ where $j=0,1,2,\cdots,N-1$ and $n=0,1,2,\cdots$, respectively.
Using an $s$-stage RK method to discrete the semi-discrete  system \eqref{QAVsystem-SP} in time,
we then obtain the following fully discrete scheme.

\begin{sch}\label{scheme:QAV-RK}
Let $b_i,a_{ij}({i,j=1,\cdots,s})$ be real numbers and let $c_i=\sum_{j=1}^sa_{ij}$.
 For given ${\bm B}^{n},{\bm \rho}^{n}, {\bm u}^{n},{\bm \phi}^n\in\mathbb{V}_h$, an $s$-stage RK method is given by
	\begin{equation}\label{TDIV}
		\begin{cases}
			{\bm B}_{ni} =  {\bm B}^n +  \tau \sum\limits_{j=1}^s a_{ij} k_j^{1}, \ k_i^{1} ={\rm i} \big( \omega {\mathcal D}_{2} {\bm B}_{ni}  - \kappa \big( {\bm u}_{ni} - \frac{1}{2}\nu{\bm \rho}_{ni}
                  + q {\bm \phi}_{ni} \big)\cdot  {\bm B}_{ni} \big), \\
	    	{\bm \rho}_{ni} =  {\bm \rho}^n +  \tau \sum\limits_{j=1}^s a_{ij} k_j^{2}, \ k_i^{2} =  {\mathcal D}_{1} \big( -{\bm u}_{ni} + \nu {\bm \rho}_{ni}  - \kappa {\bm \phi}_{ni} \big), \\
			{\bm u}_{ni} = {\bm u}^n +  \tau \sum\limits_{j=1}^s a_{ij}  k_j^{3}, \ k_i^{3}=  {\mathcal D}_{1}  \big( - \beta{\bm \rho}_{ni} + \nu {\bm u}_{ni}  + \frac{1}{2}\kappa \nu {\bm \phi}_{ni} \big), \\
			{\bm \phi}_{ni} ={\bm \phi}^n +  \tau \sum\limits_{j=1}^s a_{ij}  k_j^{4}, \ k_i^{4}=   2 {\rm Re} ( \bar{{\bm B}}_{ni} \cdot k_i^{1} ),\ i=1,2,\cdots,s,
		\end{cases}
	\end{equation}
	and ${\bm B}^{n+1},{\bm \rho}^{n+1}, {\bm u}^{n+1}$ and ${\bm \phi}^{n+1}$ are then updated by
	\begin{align}
		{\bm B}^{n+1} =  {\bm B}^n +  \tau \sum\limits_{i=1}^s b_{i} k_i^{1},\label{TD-E} \\
        {\bm \rho}^{n+1} =  {\bm \rho}^n +  \tau \sum\limits_{i=1}^s b_{i} k_i^{2}, \label{TD-rho}\\
		{\bm u}^{n+1} ={\bm u}^n +  \tau \sum\limits_{i=1}^s b_{i} k_i^{3},\label{TD-u} \\
		{\bm \phi}^{n+1} = {\bm \phi}^n +  \tau \sum\limits_{i=1}^s b_{i} k_i^{4}. \label{TD-phi}
	\end{align}
%with the initial data
%\begin{align}
%{\bm B}^0 = {\bm B}_0(x), \   {\bm \rho}^0 = \rho_0(x),  \  {\bm u}^0 =   u(x),  \   {\bm \phi}^0 = |{\bm B}^0|^2. \label{initial-data}
%	\end{align}
\end{sch}

\begin{thm}\label{thm:SD-CL} If the coefficients of the RK method \eqref{TDIV}-\eqref{TD-phi} satisfy the following condition
	\begin{equation}\label{RK-symplectic-condition}
		b_i a_{i j} + b_j a_{j i} = b_i b_j,\  \forall~i,j = 1,\cdots,s,
	\end{equation}
then the \textbf{Scheme \ref{scheme:QAV-RK}} conserves the following discrete invariants:
\begin{itemize}
\item The discrete mass
\begin{align}
		& \mathcal{M}_h^{n+1}=\mathcal{M}_{h}^n,\ \mathcal{M}_{h}^n=\langle|{\bm B}^{n }|^2,{\bm 1} \rangle_h,\ n=0,1,2,\cdots,.\label{QAVRKMCL}
	\end{align}
\item Two discrete quadratic invariants
\begin{align}
		& {\bm H}_{1,h}^{n+1}={\bm H}_{1,h}^{n}=0,\ \mathcal{E}_{h}^{n+1} = \mathcal{E}_{h}^n,\ {\bm H}_{1,h}^n\in\mathbb{V}_h,\ n=0,1,2,\cdots,  \label{QAVRKECL}
	\end{align}
where
\begin{align*}
&{\bm H}_{1,h}^{n}={\bm \phi}^n-|{\bm B}^{n}|^2, \label{QAVRKCphi}\\
&\mathcal{E}_{h}^n = \omega \langle{\mathcal D}_{2} {\bm B}^{n }, {\bm B}^{n }\rangle_h -\kappa \langle{\bm u}^{n }-\frac{\nu}{2} {\bm \rho}^{n }
      + \frac{q}{2} {\bm \phi}^{n }, {\bm \phi}^{n }\rangle_h - \frac{\beta}{2} \langle{\bm \rho}^{n } ,{\bm \rho}^{n }\rangle_h-
      \frac{1}{2}\langle{\bm u}^{n }, {\bm u}^{n }\rangle_h +\nu\langle{\bm u}^{n } , {\bm \rho}^{n }\rangle_h .
\end{align*}
\end{itemize}
\end{thm}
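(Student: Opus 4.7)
The plan is to reduce the claim to the classical theorem of Cooper, mentioned in the introduction: any Runge--Kutta method whose coefficients satisfy \eqref{RK-symplectic-condition} preserves every quadratic first integral of the ODE it is applied to. Collecting the semi-discrete unknowns into a single state $y(t)=({\bm B},{\bm \rho},{\bm u},{\bm \phi})$, one observes that $\mathcal{M}_{h}$, each component of ${\bm H}_{1,h}$, and the modified energy $\mathcal{E}_{h}$ are all (real) quadratic forms in $y$, and by the preceding semi-discrete conservation theorem they are already preserved along every trajectory of the semi-discrete system \eqref{QAVsystem-SP}. Thus all three discrete conservation laws will follow from one algebraic identity applied to three different quadratic forms.

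The key identity I would establish first is the following. For any Hermitian quadratic form $Q(y)=\mathrm{Re}(y^{\ast}Sy)$, if one writes $y^{n+1}=y^{n}+\tau\sum_{i}b_{i}k_{i}$ and $Y_{ni}=y^{n}+\tau\sum_{j}a_{ij}k_{j}$ with $k_{i}=F(Y_{ni})$, a direct expansion yields
\begin{equation*}
Q(y^{n+1})-Q(y^{n}) = 2\tau\sum_{i=1}^{s}b_{i}\,\mathrm{Re}\langle SY_{ni},k_{i}\rangle_{h} - \tau^{2}\sum_{i,j=1}^{s}\big(b_{i}a_{ij}+b_{j}a_{ji}-b_{i}b_{j}\big)\mathrm{Re}\langle Sk_{i},k_{j}\rangle_{h}.
\end{equation*}
The $O(\tau^{2})$ term vanishes by \eqref{RK-symplectic-condition}, while the $O(\tau)$ term vanishes because the invariance of $Q$ along the semi-discrete flow is equivalent to $\nabla Q(y)\cdot F(y)\equiv 0$ for all admissible $y$, evaluated in particular at each stage value $Y_{ni}$.

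With this template the three assertions become near-immediate. Taking $Q(y)=\langle {\bm B},{\bm B}\rangle_{h}$ gives the discrete mass equality \eqref{QAVRKMCL}; taking $Q$ to be the quadratic form defining $\mathcal{E}_{h}$ (i.e.\ the symmetric block $\omega\mathcal{D}_{2}$ acting on ${\bm B}$ together with the $({\bm \rho},{\bm u},{\bm \phi})$ block encoding the coefficients $\kappa,\nu,\beta,q$) yields the energy identity in \eqref{QAVRKECL}. For the pointwise invariant ${\bm H}_{1,h}$ I would instead do the computation node by node: expanding ${\bm \phi}^{n+1}-|{\bm B}^{n+1}|^{2}$ via \eqref{TD-E} and \eqref{TD-phi} and using the defining relation $k_{i}^{4}=2\,\mathrm{Re}(\bar{{\bm B}}_{ni}\cdot k_{i}^{1})$, the $O(\tau)$ contributions cancel algebraically by construction of $k_{i}^{4}$, and the $O(\tau^{2})$ remainder collapses to $\tau^{2}\sum_{i,j}(b_{i}a_{ij}+b_{j}a_{ji}-b_{i}b_{j})\,\mathrm{Re}(\bar{k}_{i}^{1}\cdot k_{j}^{1})$, which is zero by \eqref{RK-symplectic-condition}. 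The only mild subtlety, not really an obstacle, is the bookkeeping for the complex-valued ${\bm B}$-component, which one handles by viewing the Hermitian form as a real quadratic form of twice the real dimension; after this identification the classical symplectic-RK argument applies verbatim.
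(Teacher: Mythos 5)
Your proposal is correct and follows essentially the same route as the paper's own proof: expand each quadratic quantity over one RK step, substitute $y^{n}=Y_{ni}-\tau\sum_{j}a_{ij}k_{j}$ so that the $\mathcal{O}(\tau^{2})$ cross terms carry the factor $b_{i}a_{ij}+b_{j}a_{ji}-b_{i}b_{j}$ and vanish by \eqref{RK-symplectic-condition}, and then observe that the remaining $\mathcal{O}(\tau)$ term is exactly the (vanishing) derivative of the invariant along the semi-discrete vector field evaluated at the stage values. The only difference is organizational: you state the Cooper-type identity once as a template and invoke the semi-discrete conservation theorem to kill the $\mathcal{O}(\tau)$ term, whereas the paper unrolls the same computation three times explicitly (componentwise for ${\bm H}_{1,h}$, and re-verifying the stage-value cancellations via the symmetry of $\mathcal{D}_{2}$ and skew-symmetry of $\mathcal{D}_{1}$).
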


\begin{proof}
	It follows from Eq. \eqref{TD-E} that
 \begin{equation}  \label{u2diff1}
 \begin{array}{lll}
\mathcal{M}_{h}^{n+1}-\mathcal{M}_{h}^{n}&= \tau \sum\limits_{i=1}^s b_{i} \big<{\bm B}^{n},k_i^{1} \big>_h
              + \tau \sum\limits_{i=1}^s b_{i}\big< k_i^{1},{\bm B}^{n} \big>_h +\tau^2 \sum\limits_{i,j=1}^s b_{i}b_j \big<k_i^{1},k_j^{1} \big>_h.
	\end{array}
\end{equation}
Substituting ${\bm B}^n = {\bm B}_{ni} -\tau \sum\limits_{j=1}^s a_{ij} k_j^{1}$ into  \eqref{u2diff1} and notice \eqref{RK-symplectic-condition}, we have
	\begin{align}\label{u2diff2}
	\mathcal{M}_{h}^{n+1}-\mathcal{M}_{h}^{n} &= \tau \sum\limits_{i=1}^s b_{i}\langle{\bm B}_{ni},k_i^{1} \rangle_h
              + \tau \sum\limits_{i=1}^s b_{i}\langle k_i^{1},{\bm B}_{ni}\rangle_h+\tau^2\sum\limits_{i,j=1}^s (b_{i}b_j-b_ia_{ij}-b_ja_{ji})\langle k_i^{1},k_j^{1} \rangle_h\nonumber\\
              &= \tau \sum\limits_{i=1}^s b_{i}\langle{\bm B}_{ni},k_i^{1}\rangle_h
              + \tau \sum\limits_{i=1}^s b_{i}\langle k_i^{1},{\bm B}_{ni}\rangle_h.
	\end{align}
In addition, we note that
 \begin{align*}
&~~\tau \sum\limits_{i=1}^s b_{i}\langle{\bm B}_{ni},k_i^{1} \rangle_h
              + \tau \sum\limits_{i=1}^s b_{i}\langle k_i^{1},{\bm B}_{ni}\rangle_h \\
%&= \tau \sum\limits_{i=1}^s b_{i}\big( \big<{\bm B}_{ni},k_i^{1} \big>_h + \big<k_i^{1},{\bm B}_{ni} \big>_h \big)\\
&= 2\tau \sum\limits_{i=1}^s b_{i}  {\rm Re}\langle k_i^{1}, {\bm B}_{ni}\rangle_h   \\
&= 2\tau \sum\limits_{i=1}^s b_{i} {\rm Re}   \Big(  {\rm i} \omega \langle{\mathcal D}_{2} {\bm B}_{ni},{\bm B}_{ni}\rangle_h -  {\rm i} \kappa \langle{\bm u}_{ni} - \frac{1}{2}\nu {\bm \rho}_{ni} + q {\bm \phi}_{ni}, |{\bm B}_{ni} |^2\rangle_h  \Big)  \\
&=0.
\end{align*}
Combining the above result with \eqref{u2diff2}, we obtain the discrete mass \eqref{QAVRKMCL}.

With \eqref{TD-E} and \eqref{RK-symplectic-condition}, we have
\begin{align}
 |{\bm B}^{n+1}|^2 - |{\bm B}^{n }|^2 &= {\bm B}^{n+1} \cdot \bar{{\bm B}}^{n+1}  - {\bm B}^{n}\cdot \bar{{\bm B}}^{n}  \nonumber \\
 & =  \tau \sum\limits_{i=1}^sb_{i}  ( k_i^{1} \cdot \bar{{\bm B}}^{n} )
    +\tau \sum\limits_{i=1}^sb_{i}  ( \bar{k}_i^{1} \cdot {\bm B}^{n} )
   +\tau^2 \sum\limits_{i,j=1}^s b_{i}b_{j} ( k_i^{1} \cdot \bar{k}_j^{1} ).  \label{u2diff3a}
\end{align}
Substituting ${\bm B}^n = {\bm B}_{ni} -\tau \sum\limits_{j=1}^s a_{ij} k_j^{1}$ into \eqref{u2diff3a} and using \eqref{RK-symplectic-condition}, we obtain
\begin{align}
 |{\bm B}^{n+1}|^2-|{\bm B}^{n }|^2&=\tau \sum\limits_{i=1}^sb_{i}(k_i^{1}\cdot\bar{{\bm B}}_{ni})
    +\tau \sum\limits_{i=1}^sb_{i}(\bar{k}_i^{1} \cdot {\bm B}_{ni})
   +\tau^2 \sum\limits_{i,j=1}^s (b_{i}b_{j}- b_i a_{i j} - b_j a_{j i})( k_i^{1}\cdot \bar{k}_j^{1}) \nonumber \\
  & =  \tau \sum\limits_{i=1}^sb_{i}( k_i^{1} \cdot \bar{{\bm B}}_{ni})
    +\tau \sum\limits_{i=1}^sb_{i}( \bar{k}_i^{1} \cdot {\bm B}_{ni}). \label{u2diff3c}
\end{align}
Moreover, it is clear to see
	\begin{equation}\label{u2diff3}
    \begin{array}{lll}
{\bm \phi}^{n+1} -{\bm \phi}^n=\tau \sum\limits_{i=1}^s b_i k_i^4=2\tau \sum\limits_{i=1}^s b_i {\rm Re}(k_i^1 \cdot \bar{{\bm B}}_{ni})
= \tau \sum\limits_{i=1}^sb_{i}( k_i^{1} \cdot \bar{{\bm B}}_{ni} )
    +\tau \sum\limits_{i=1}^sb_{i}(\bar{k}_i^{1} \cdot {\bm B}_{ni} ) .
	\end{array}
    \end{equation}
It follows from \eqref{u2diff3c} and \eqref{u2diff3}, together with the initial condition ${\bm \phi}^0 - |{\bm B}^{0}|^2$ that
\begin{align}
 {\bm \phi}^{n+1}- |{\bm B}^{n+1}|^2 =  {\bm \phi}^n - |{\bm B}^{n }|^2,\ n=0,1,2,\cdots,,
\end{align}
which implies the first equation of \eqref{QAVRKECL} holds.
	
Using \eqref{TD-E}, \eqref{RK-symplectic-condition} and ${\bm B}^n = {\bm B}_{ni} -\tau \sum\limits_{j=1}^s a_{ij} k_j^{1}$, we can deduce
  \begin{align*}
  &\langle{\mathcal D}_{2} {\bm B}^{n+1}, {\bm B}^{n+1}\rangle_h -\langle{\mathcal D}_{2} {\bm B}^{n}, {\bm B}^{n}\rangle_h
         = 2\tau \sum\limits_{i=1}^s b_{i} {\rm Re}\langle{\mathcal D}_{2} {\bm B}_{ni},k_i^{1}\rangle_h.
 \end{align*}
 Similarly, we have
 \begin{align*}
   &\langle{\bm u}^{n+1},{\bm \phi}^{n+1}\rangle_h-\langle{\bm u}^{n},{\bm \phi}^{n}\rangle_h
         = \tau\sum\limits_{i=1}^s b_{i} \big(\langle k_i^{3},{\bm \phi}_{ni}\rangle_h+\langle{\bm u}_{ni},k_i^{4}\rangle_h\big),\\
  &\langle{\bm \rho}^{n+1},{\bm \phi}^{n+1}\rangle_h-\langle{\bm \rho}^{n},{\bm \phi}^{n}\rangle_h
         = \tau\sum\limits_{i=1}^s b_{i} \big(\langle k_i^{2},{\bm \phi}_{ni}\rangle_h+\langle{\bm \rho}_{ni},k_i^{4}\rangle_h\big),\\
  &\langle{\bm \phi}^{n+1},{\bm \phi}^{n+1}\rangle_h-\langle{\bm \phi}^{n},{\bm \phi}^{n}\rangle_h
         =2 \tau\sum\limits_{i=1}^s b_{i}\langle{\bm \phi}_{ni},k_i^{4}\rangle_h,\\	
  &\langle{\bm\rho}^{n+1},{\bm \rho}^{n+1}\rangle_h-\langle{\bm \rho}^{n},{\bm\rho}^{n}\rangle_h
         =2\tau\sum\limits_{i=1}^s b_{i}\langle{\bm \rho}_{ni},k_i^{2}\rangle_h ,\\
  &\langle{\bm u}^{n+1},{\bm u}^{n+1}\rangle_h -\langle{\bm u}^{n},{\bm u}^{n}\rangle_h
           =2\tau \sum\limits_{i=1}^s b_{i}\langle{\bm u}_{ni},k_i^{3}\rangle_h,
\end{align*}
and
\begin{align*}
  &\langle{\bm u}^{n+1}, {\bm \rho}^{n+1}\rangle_h -\langle{\bm u}^{n},{\bm \rho}^{n}\rangle_h
         = \tau \sum\limits_{i=1}^s b_{i} \big(\langle k_i^{3},{\bm \rho}_{ni}\rangle_h +\langle{\bm u}_{ni},k_i^{2}\rangle_h\big).
  \end{align*}
Then, combining the above equations with \eqref{TDIV}, together with $k_i^{4}=2 {\rm Re} (\bar{{\bm B}}_{ni}\cdot k_i^{1})$, we can get
\begin{align*}
  &~~~	\mathcal{E}_{h}^{n+1} - \mathcal{E}_{h}^n \\
&= \tau \sum\limits_{i=1}^s b_i\Big[2\omega {\rm Re}\langle{\mathcal D}_{2} {\bm B}_{ni},k_i^{1}\rangle_h
  -\kappa \big(\langle k_i^{3},{\bm \phi}_{ni}\rangle_h+\langle{\bm u}_{ni},k_i^{4}\big>_h\big)
  +\frac{\kappa \nu }{2}  \big(\langle k_i^{2},{\bm \phi}_{ni}\big>_h +\langle{\bm \rho}_{ni},k_i^{4}\rangle_h\big)  \\
&~~~  - \kappa q\langle{\bm \phi}_{ni},k_i^{4}\rangle_h-\beta\langle{\bm \rho}_{ni},k_i^{2}\rangle_h
     -\langle{\bm u}_{ni},k_i^{3}\rangle_h
     +\nu \big(\langle k_i^{3},{\bm \rho}_{ni}\rangle_h+\langle{\bm u}_{ni},k_i^{2}\rangle_h\big)\Big] \\
&=  \tau \sum\limits_{i=1}^s b_i\Big[2{\rm Re}
   \langle\omega {\mathcal D}_{2} {\bm B}_{ni}-\kappa \big({\bm u}_{ni}-\frac{\nu }{2}{\bm \rho}_{ni}+q {\bm \phi}_{ni}\big)\cdot {\bm B}_{ni},k_i^{1}\rangle_h
 +\langle-{\bm u}_{ni} + \nu {\bm \rho}_{ni}-\kappa {\bm \phi}_{ni} , k_i^{3}\rangle_h     \\
&~~~ +\langle- \beta{\bm \rho}_{ni} + \nu {\bm u}_{ni}+\frac{1}{2}\kappa \nu {\bm \phi}_{ni} , k_i^{2}\rangle_h\Big] \\
&= \tau \sum\limits_{i=1}^s b_i \Big[-2{\rm Re}\langle{\rm i} k_i^{1},k_i^{1}\rangle_h
 +\langle-{\bm u}_{ni}+\nu{\bm\rho}_{ni}-\kappa{\bm \phi}_{ni},{\mathcal D}_{1}\big(-\beta{\bm \rho}_{ni} + \nu {\bm u}_{ni}  + \frac{1}{2}\kappa \nu {\bm \phi}_{ni} \big)\rangle_h \\
&~~~+\langle-\beta{\bm \rho}_{ni}+\nu {\bm u}_{ni}+\frac{1}{2}\kappa \nu{\bm \phi}_{ni}, {\mathcal D}_{1}\big(-{\bm u}_{ni} + \nu{\bm \rho}_{ni}-\kappa{\bm \phi}_{ni} \big)\rangle_h\Big] \\
&= 0.
\end{align*}
This immediately gives the second equation of \eqref{QAVRKECL}. This proof is completed.
\end{proof}

\begin{thm} If the coefficients of the RK method \eqref{TDIV}-\eqref{TD-phi} satisfy the condition \eqref{RK-symplectic-condition}, the \textbf{Scheme \ref{scheme:QAV-RK}} can conserve the discrete Hamiltonian energy, as follows:
	
	\begin{equation}\label{energy-relationship}
 \mathcal{H}_h^{n+1}=\mathcal{H}_h^n,\ n=0,1,2,\cdots,
\end{equation}
where
\begin{align*} 	
\mathcal{H}_h^n =\omega\langle{\mathcal D}_{2} {\bm B}^{n}, {\bm B}^{n}\rangle_h
           - \kappa\langle{\bm u}^{n}-\frac{\nu}{2}{\bm \rho}^{n }
      + \frac{q}{2}|{\bm B}^n|^2, |{\bm B}^n|^2\rangle_h- \frac{\beta}{2}\langle{\bm \rho}^{n} ,{\bm \rho}^{n}\rangle_h
      -\frac{1}{2}\langle{\bm u}^{n},{\bm u}^{n}\rangle_h+\nu\langle{\bm u}^{n} ,{\bm \rho}^{n}\rangle_h.
\end{align*}
\end{thm}
\begin{proof} According to Theorem \ref{thm:SD-CL}, the \textbf{Scheme \ref{scheme:QAV-RK}} can conserve two discrete quadratic invariants \eqref{QAVRKECL}. Then substituting ${\bm \phi}^{n+1}=|{\bm B}^{n+1}|^2$ into the second equation of \eqref{QAVRKECL}, one can deduce that
\begin{equation*}
 \mathcal{H}_h^{n+1}=\mathcal{H}_h^n,\ n=0,1,2,\cdots,
\end{equation*}
which implies that the \textbf{Scheme \ref{scheme:QAV-RK}} can conserve the discrete Hamiltonian energy. This completes the proof.
\end{proof}

\begin{thm} For any RK method, the \textbf{Scheme \ref{scheme:QAV-RK}} conserves the two discrete linear invariants, as follows:
\begin{align}\label{QAVRKCLI}
        &  \mathcal{I}_{1h}^{n+1}= \mathcal{I}_{1h}^n,\   \mathcal{I}_{2h}^{n+1}= \mathcal{I}_{2h}^n,\ n=0,1,\cdots,
        \end{align}
where
\begin{align*}
\mathcal{I}_{1h}^n=\langle{\bm \rho}^{n},{\bm 1}\rangle_h,\ \mathcal{I}_{2h}^n=\langle{\bm u}^{n} ,{\bm 1}\rangle_h.
\end{align*}
\end{thm}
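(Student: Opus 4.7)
The plan is to exploit the fact that the right-hand sides of the $\bm\rho$ and $\bm u$ equations in \eqref{QAVsystem-SP} are pure spatial derivatives (i.e., $\mathcal{D}_1$ applied to something), so at the discrete level the two linear invariants follow simply from taking the discrete inner product with the constant vector $\mathbf 1$ and using a ``summation by parts'' property of the Fourier pseudo-spectral matrix $\mathcal{D}_1$. Crucially, this argument never invokes the symplectic condition \eqref{RK-symplectic-condition}; it only uses the consistency relation $\sum_{i=1}^{s} b_i = 1$ together with the update rules \eqref{TD-rho} and \eqref{TD-u}, which is why the conclusion holds for any RK coefficients $(A,b,c)$.

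The key lemma I will establish first is the identity
\begin{equation*}
\bigl\langle \mathcal{D}_1 \bm w,\, \mathbf 1\bigr\rangle_h = 0,\qquad \forall\, \bm w\in \mathbb{V}_h.
\end{equation*}
Using the spectral factorization \eqref{SP-D1}, namely $\mathcal{D}_1=\mathcal{F}_N^{H}\Lambda^1\mathcal{F}_N$, I observe that $\mathcal{F}_N\mathbf 1 = \sqrt{N}\,e_0$ where $e_0$ is the unit vector supported at the zeroth Fourier mode, and that the $(0,0)$ entry of $\Lambda^1$ is zero. Hence $\mathcal{D}_1\mathbf 1 = 0$, and combined with the skew-Hermiticity $\mathcal{D}_1^{H}=-\mathcal{D}_1$ (which follows from $(\Lambda^1)^{H}=-\Lambda^1$), one gets $\langle \mathcal{D}_1\bm w,\mathbf 1\rangle_h = -\langle \bm w,\mathcal{D}_1\mathbf 1\rangle_h = 0$.

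With this lemma in hand, I would apply it to the stage slopes in \eqref{TDIV}: for every $i=1,\dots,s$,
\begin{equation*}
\bigl\langle k_i^{2},\mathbf 1\bigr\rangle_h
= \bigl\langle \mathcal{D}_1(-\bm u_{ni}+\nu\bm\rho_{ni}-\kappa\bm\phi_{ni}),\mathbf 1\bigr\rangle_h = 0,
\end{equation*}
and, analogously,
\begin{equation*}
\bigl\langle k_i^{3},\mathbf 1\bigr\rangle_h
= \bigl\langle \mathcal{D}_1(-\beta\bm\rho_{ni}+\nu\bm u_{ni}+\tfrac{1}{2}\kappa\nu\bm\phi_{ni}),\mathbf 1\bigr\rangle_h = 0.
\end{equation*}
Then, pairing the update formulas \eqref{TD-rho} and \eqref{TD-u} with $\mathbf 1$ under $\langle\cdot,\cdot\rangle_h$ yields
\begin{equation*}
\mathcal{I}_{1h}^{n+1}
=\bigl\langle \bm\rho^{n+1},\mathbf 1\bigr\rangle_h
=\bigl\langle \bm\rho^{n},\mathbf 1\bigr\rangle_h + \tau\sum_{i=1}^{s} b_i\bigl\langle k_i^{2},\mathbf 1\bigr\rangle_h
=\mathcal{I}_{1h}^{n},
\end{equation*}
and similarly $\mathcal{I}_{2h}^{n+1}=\mathcal{I}_{2h}^{n}$, which is exactly \eqref{QAVRKCLI}.

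There is no genuine obstacle here; the only point that requires care is the lemma $\langle \mathcal{D}_1\bm w,\mathbf 1\rangle_h = 0$, because the proof must rely on properties specific to the Fourier pseudo-spectral differentiation matrix rather than on a generic discrete integration-by-parts identity (which would otherwise require periodicity assumptions on the test function). Once this lemma is recorded, the rest reduces to one line per invariant and is independent of the symplecticity of $(A,b,c)$, so the result applies to every RK scheme, in contrast to Theorem \ref{thm:SD-CL}.
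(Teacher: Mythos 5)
Your proof is correct and follows essentially the same route as the paper: pair the update formulas \eqref{TD-rho}--\eqref{TD-u} with the constant vector and observe that each stage slope $k_i^2$, $k_i^3$ is $\mathcal{D}_1$ applied to something, so its discrete mean vanishes. The only difference is that you explicitly justify $\langle \mathcal{D}_1\bm w,\mathbf 1\rangle_h=0$ via the factorization \eqref{SP-D1}, a step the paper leaves implicit (and note that, as your own computation shows, even the consistency condition $\sum_i b_i=1$ is not actually needed, since each term is individually zero).
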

\begin{proof}
According to \eqref{TD-rho}, we can obtain
\begin{align*}
&	\mathcal{I}_{1h}^{n+1}- \mathcal{I}_{1h}^n
   = \tau \sum\limits_{i=1}^s b_{i}\langle k_i^{2},{\bm 1}\rangle_h =\tau \sum\limits_{i=1}^sb_{i}\langle{\mathcal D}_{1}\big( -{\bm u}_{ni}+\nu {\bm \rho}_{ni}-\kappa {\bm \phi}_{ni} \big),{\bm 1}\rangle_h=0.
   	\end{align*}
The same procedure may be easily adapted to obtain
   	\begin{align*}
&   \mathcal{I}_{2h}^{n+1}-\mathcal{I}_{2h}^n=\tau \sum\limits_{i=1}^s b_{i}\langle k_i^{3},{\bm 1}\rangle_h= \tau \sum\limits_{i=1}^s b_{i}\langle{\mathcal D}_{1}\big(-\beta{\bm \rho}_{ni}+\nu {\bm u}_{ni}+\frac{1}{2}\kappa \nu {\bm \phi}_{ni} \big),{\bm 1}\rangle_h=0.
	\end{align*}
This completes the proof.
\end{proof}

\begin{rmk}\label{rmk-zr-3.1} If the coefficients of the RK method \eqref{TDIV}-\eqref{TD-phi} satisfy the condition \eqref{RK-symplectic-condition}, the \textbf{Scheme \ref{scheme:QAV-RK}} can rewrite into
	\begin{equation*}
		\begin{cases}
			{\bm B}_{ni} =  {\bm B}^n +  \tau \sum\limits_{j=1}^s a_{ij} k_j^{1}, \ k_i^{1} ={\rm i} \big( \omega {\mathcal D}_{2} {\bm B}_{ni}  - \kappa \big( {\bm u}_{ni} - \frac{1}{2}\nu{\bm \rho}_{ni}
                  + q {\bm \phi}_{ni} \big)\cdot  {\bm B}_{ni} \big), \\
	    	{\bm \rho}_{ni} =  {\bm \rho}^n +  \tau \sum\limits_{j=1}^s a_{ij} k_j^{2}, \ k_i^{2} =  {\mathcal D}_{1} \big( -{\bm u}_{ni} + \nu {\bm \rho}_{ni}  - \kappa {\bm \phi}_{ni} \big), \\
			{\bm u}_{ni} = {\bm u}^n +  \tau \sum\limits_{j=1}^s a_{ij}  k_j^{3}, \ k_i^{3}=  {\mathcal D}_{1}  \big( - \beta{\bm \rho}_{ni} + \nu {\bm u}_{ni}  + \frac{1}{2}\kappa \nu {\bm \phi}_{ni} \big), \\
			{\bm \phi}_{ni} =|{\bm B}^n|^2 +  2\tau \sum\limits_{j=1}^s a_{ij}{\rm Re} ( \bar{{\bm B}}_{nj} \cdot k_j^{1} ),\ i=1,2,\cdots,s,
		\end{cases}
	\end{equation*}
	and ${\bm B}^{n+1},{\bm \rho}^{n+1}$ and ${\bm u}^{n+1}$  are then updated by
	\begin{align*}
		{\bm B}^{n+1} =  {\bm B}^n +  \tau \sum\limits_{i=1}^s b_{i} k_i^{1},\\
        {\bm \rho}^{n+1} =  {\bm \rho}^n +  \tau \sum\limits_{i=1}^s b_{i} k_i^{2},\\
		{\bm u}^{n+1} ={\bm u}^n +  \tau \sum\limits_{i=1}^s b_{i} k_i^{3},
	\end{align*}
which implies that the QAV approach need introduce an auxiliary variable, but the auxiliary variable
can be eliminated in practical computations. Thus, it cannot increase additional computational costs.

\end{rmk}

\begin{rmk}
	It is proven that the Gauss methods have the same order as the underlying quadrature formula and satisfy the condition \eqref{RK-symplectic-condition} \cite{ELW06,Sanz88}. The RK coefficients of the 1-stage, 2-stage and 3-stage Gauss methods are given in Table \ref{Gaussian23}. In addition, when the RK coefficients of $s=2$ and 3 are applied to the \textbf{Scheme \ref{scheme:QAV-RK}}, we denote the obtained schemes as FPRK-2 scheme and FPRK-3 scheme, respectively.

\begin{table}[H]
\centering
\begin{tabular}{c|cc}
${c}$ & ${A}$  \\
\hline
& ${b}^{T}$ \\
\end{tabular}
=
\begin{tabular}{c|c}
$\frac{1}{2}$ &$\frac{1}{2}$ \\
\hline
 &1
\end{tabular},\
\begin{tabular}{c|cc}
${c}$ & ${A}$  \\
\hline
& ${b}^{T}$ \\
\end{tabular}
=
\begin{tabular}{c|cc}
$\frac{1}{2}-\frac{\sqrt{3}}{6}$ &$\frac{1}{4}$ & $\frac{1}{4}- \frac{\sqrt{3}}{6}$\\
$\frac{1}{2}+\frac{\sqrt{3}}{6}$ &$\frac{1}{4}+ \frac{\sqrt{3}}{6}$ &$\frac{1}{4}$ \\
\hline
                                 &$\frac{1}{2}$&$\frac{1}{2}$
\end{tabular},\\
\begin{tabular}{c|cc}
${c}$ & ${A}$  \\
\hline
& ${b}^{T}$ \\
\end{tabular}
=\begin{tabular}{c|ccc}
$\frac{1}{2}-\frac{\sqrt{15}}{10}$ &$\frac{5}{36}$ &  $\frac{2}{9}-\frac{\sqrt{15}}{15}$    &$\frac{5}{36}- \frac{\sqrt{15}}{30}$\\
$\frac{1}{2}$   &$\frac{5}{36}+ \frac{\sqrt{15}}{24}$  &  $\frac{2}{9}$  & $\frac{5}{36}- \frac{\sqrt{15}}{24}$ \\
$\frac{1}{2}+\frac{\sqrt{15}}{10}$ & $\frac{5}{36}+ \frac{\sqrt{15}}{30}$ & $\frac{2}{9}+\frac{\sqrt{15}}{15}$  & $\frac{5}{36}$ \\
\hline
& $\frac{5}{18}$ & $\frac{4}{9}$  & $\frac{5}{18}$
\end{tabular}.
\caption{\footnotesize The Gauss methods of order 2 (s=1), 4 (s=2) and 6 (s=3).}\label{Gaussian23}
\end{table}
\begin{rmk}\label{ZR-rmk-3.2} As the Gauss method of order 2 (see Table \ref{Gaussian23}) is chosen, the {\bf Scheme \ref{scheme:QAV-RK}} reduced to the CN-FP scheme \cite{WangZhang19acm}, as follows:
\begin{align}\label{CN-FP-scheme}
\left\{
\begin{aligned}
&{\rm i} \delta_t{\bm B }^n +\omega {\mathcal D}_{2} {\bm B}^{n+\frac{1}{2}}  - \kappa \big({\bm u}^{n+\frac{1}{2}}- \frac{1}{2}\nu {\bm \rho}^{n+\frac{1}{2}} + \frac{q}{2}(|{\bm B}^{n+1}|^2+|{\bm B}^{n}|^2)\big)\cdot  {\bm B}^{n+\frac{1}{2}}=0, \\
&\delta_t {\bm \rho}^n +{\mathcal D}_{1} \big({\bm u}^{n+\frac{1}{2}}-\nu {\bm \rho}^{n+\frac{1}{2}}+\frac{\kappa}{2}(|{\bm B}^{n+1}|^2+|{\bm B}^{n}|^2)\big)=0,\\
&\delta_t {\bm u}^n +{\mathcal D}_{1}  \big(\beta {\bm \rho}^{n+\frac{1}{2}}-\nu {\bm u}^{n+\frac{1}{2}}-\frac{\kappa \nu}{4}(|{\bm B}^{n+1}|^2+|{\bm B}^{n}|^2)\big)=0,\ n=0,1,2,\cdots,
	\end{aligned}
\right.
\end{align}
where
\begin{align*}
\delta_t{\bm U}^{n}=\frac{{\bm U}^{n+1}-{\bm U}^n}{\tau},\ {\bm U}^{n+\frac{1}{2}}=\frac{{\bm U}^{n+1}+{\bm U}^n}{2},\ {\bm U}^n\in\mathbb{V}_h.
\end{align*}
\end{rmk}

\end{rmk}

\section{An efficient implementation for the proposed scheme} \label{Sec-ZR-4}

In this section, we propose an efficient fixed-point iteration
solver for solving the nonlinear equations of the {\bf Scheme \ref{scheme:QAV-RK}}, which is inspired by \cite{CWJ2021cpc,ZJ2202Arix}.
For convenience, we only take the FPRK-2 scheme as an example.

For given ${\bm B}^n,\ {\bm \rho}^n$ and ${\bm u}^n$ , the FPRK-2 scheme (see Remark \ref{rmk-zr-3.1}) can be rewritten as
\begin{align}\label{f-s-1}
&k_1^1= {\rm i} \big( \omega {\mathcal D}_{2} {\bm B}_{n1}  - \kappa \big( {\bm u}_{n1} - \frac{1}{2}\nu{\bm \rho}_{n1}
                  + q {\bm \phi}_{n1} \big)\cdot  {\bm B}_{n1} \big), \\\label{f-s-1a}
&k_2^1={\rm i} \big( \omega {\mathcal D}_{2} {\bm B}_{n2}  - \kappa \big( {\bm u}_{n2} - \frac{1}{2}\nu{\bm \rho}_{n2}
                  + q {\bm \phi}_{n2} \big)\cdot  {\bm B}_{n2} \big),  \\  \label{f-s-2}
&k_1^2={\mathcal D}_{1} \big( -{\bm u}_{n1} + \nu {\bm \rho}_{n1}  - \kappa {\bm \phi}_{n1} \big),\ k_2^2={\mathcal D}_{1} \big( -{\bm u}_{n2} + \nu {\bm \rho}_{n2}  - \kappa {\bm \phi}_{n2} \big),\\\label{f-s-3}
&k_1^3={\mathcal D}_{1}\big( - \beta{\bm \rho}_{n1} + \nu {\bm u}_{n1}  + \frac{\kappa \nu}{2} {\bm \phi}_{n1} \big), \ k_2^3={\mathcal D}_{1}\big(-\beta{\bm \rho}_{n2} + \nu {\bm u}_{n2}+ \frac{\kappa \nu}{2} {\bm \phi}_{n2} \big),
\end{align}
where \begin{align}\label{f-s-4}
&{\bm B}_{n1} =  {\bm B}^n+\tau ( a_{11}k_1^1+ a_{12}k_2^1) , \
   {\bm B}_{n2} =  {\bm B}^n+\tau ( a_{21}k_1^1+ a_{22}k_2^1),\\\label{f-s-6}
&{\bm \rho}_{n1} =  {\bm \rho}^n +\tau (a_{11}k_1^2+  a_{12}k_2^2),\
  {\bm \rho}_{n2} =  {\bm \rho}^n +\tau (a_{21}k_1^2+  a_{22}k_2^2),\\\label{f-s-7}
&{\bm u}_{n1} =  {\bm u}^n + \tau (a_{11}k_1^3+  a_{12}k_2^3),\
  {\bm u}_{n2} =  {\bm u}^n + \tau \big(a_{21}k_1^3+  a_{22}k_2^3\big),\\\label{f-s-8}
&{\bm \phi}_{n1} =  |{\bm B}^n|^2 + 2\tau \Big(a_{11}\text{Re} (\bar{{\bm B}}_{n1} \cdot k_1^{1} )+  a_{12}\text{Re} ( \bar{{\bm B}}_{n2} \cdot k_2^{1} )\Big),\\
& {\bm \phi}_{n2} =  |{\bm B}^n|^2 + 2\tau\Big(a_{21}\text{Re} ( \bar{{\bm B}}_{n1} \cdot k_1^{1} )+  a_{22}\text{Re} ( \bar{{\bm B}}_{n2} \cdot k_2^{1} )\Big).
\end{align}
Then, ${\bm B}^{n+1},{\bm \rho}^{n+1}$ and ${\bm u}^{n+1}$ are updated by
\begin{align}\label{new-f-s-1}
&{\bm B}^{n+1}={\bm B}^n +\tau\sum_{i=1}^2b_ik_i^1,\ {\bm \rho}^{n+1}={\bm \rho}^n +\tau \sum_{i=1}^2b_ik_i^2,\ {\bm u}^{n+1}={\bm u}^n +\tau\sum_{i=1}^2b_ik_i^3.
\end{align}

In light of \eqref{f-s-1}-\eqref{f-s-1a} and \eqref{f-s-4}, we have
\begin{align}
&\left[
  \begin{array}{cc}\vspace{2mm}
    1-\text{i}\tau\omega a_{11} {\mathcal D}_{2} & -\text{i}\tau \omega a_{12} {\mathcal D}_{2}\\
    -\text{i}\tau \omega a_{21} {\mathcal D}_{2} & 1-\text{i}\tau \omega a_{22} {\mathcal D}_{2}\\
  \end{array}
\right]
\left[
  \begin{array}{c} \vspace{2mm}
  k_1^1\\
  k_2^1
\end{array}
  \right]=\left[
  \begin{array}{c}\vspace{2mm}
  {\bm F}_1^n\\
  {\bm F}_2^n
\end{array}
  \right],  \label{f-s-9}
\end{align}
where
\begin{align*}
{\bm F}_j^n=\text{i}\omega {\mathcal D}_{2} {\bm B}^n-\text{i}\kappa \big( {\bm u}_{nj} - \frac{1}{2}\nu{\bm \rho}_{nj}
                  + q {\bm \phi}_{nj} \big)\cdot  {\bm B}_{nj},\ j=1,2.
\end{align*}
Similarly, using \eqref{f-s-6} and \eqref{f-s-7}, the equations \eqref{f-s-2} and \eqref{f-s-3} are equivalent to
\begin{align}\label{f-s-10}
&\left[
  \begin{array}{cccc}\vspace{2mm}
    1- \nu\tau a_{11} {\mathcal D}_{1}  & - \nu\tau a_{12} {\mathcal D}_{1}
             & \tau a_{11} {\mathcal D}_{1} & \tau a_{12} {\mathcal D}_{1}  \\ \vspace{2mm}
   - \nu\tau a_{21} {\mathcal D}_{1} & 1- \nu\tau a_{22} {\mathcal D}_{1}
             & \tau a_{21} {\mathcal D}_{1} & \tau a_{22} {\mathcal D}_{1} \\ \vspace{2mm}
   \beta\tau a_{11} {\mathcal D}_{1}  & \beta\tau a_{12} {\mathcal D}_{1}
             & 1-\nu\tau a_{11} {\mathcal D}_{1} & -\nu\tau a_{12} {\mathcal D}_{1}  \\ \vspace{2mm}
   \beta\tau a_{21} {\mathcal D}_{1} & \beta\tau a_{22} {\mathcal D}_{1}
             & -\nu\tau a_{21} {\mathcal D}_{1} & 1 -\nu\tau a_{22} {\mathcal D}_{1}
  \end{array}
\right]\left[
  \begin{array}{c} \vspace{2mm}
  k_1^2\\ \vspace{2mm}
  k_2^2\\ \vspace{2mm}
  k_1^3\\ \vspace{2mm}
  k_2^3
\end{array}
  \right] =\left[
  \begin{array}{c}\vspace{2mm}
   {\mathcal D}_{1} {\bm \chi}_1^n \\ \vspace{2mm}
   {\mathcal D}_{1}{\bm \chi}_2^n \\ \vspace{2mm}
   {\mathcal D}_{1}{\bm \chi}_3^n \\ \vspace{2mm}
   {\mathcal D}_{1}{\bm \chi}_4^n
\end{array}
  \right]
\end{align}
with
\begin{align*}
&{\bm \chi}_1^n = -{\bm u}^{n} + \nu {\bm \rho}^{n}  - \kappa {\bm \phi}_{n1},\
 {\bm \chi}_2^n = -{\bm u}^{n} + \nu {\bm \rho}^{n}  - \kappa {\bm \phi}_{n2},    \\
& {\bm \chi}_3^n = - \beta {\bm \rho}^{n} + \nu{\bm u}^{n} +\frac{\kappa \nu}{2}{\bm \phi}_{n1},\
 {\bm \chi}_4^n = - \beta {\bm \rho}^{n} + \nu{\bm u}^{n} +\frac{\kappa \nu}{2} {\bm \phi}_{n2}.
\end{align*}

For the nonlinear equations \eqref{f-s-9} and \eqref{f-s-10}, we apply the fixed-point iteration strategy, as follows:
\begin{align}
&\left[
  \begin{array}{cc}\vspace{2mm}
    1-\text{i}\tau\omega a_{11} {\mathcal D}_{2} & -\text{i}\tau \omega a_{12} {\mathcal D}_{2}\\
    -\text{i}\tau \omega a_{21} {\mathcal D}_{2} & 1-\text{i}\tau \omega a_{22} {\mathcal D}_{2}\\
  \end{array}
\right]
\left[
  \begin{array}{c} \vspace{2mm}
  (k_1^1)^{l+1}\\
  (k_2^1)^{l+1}
\end{array}
  \right]=\left[
  \begin{array}{c}\vspace{2mm}
  {\bm F}_1^{n,l}\\
  {\bm F}_2^{n,l}
\end{array}
  \right],  \label{f-s-new-9}
\end{align}
and
\begin{align}\label{f-s-new-10}
&\left[
  \begin{array}{cccc}\vspace{2mm}
    1- \nu\tau a_{11} {\mathcal D}_{1}  & - \nu\tau a_{12} {\mathcal D}_{1}
             & \tau a_{11} {\mathcal D}_{1} & \tau a_{12} {\mathcal D}_{1}  \\ \vspace{2mm}
   - \nu\tau a_{21} {\mathcal D}_{1} & 1- \nu\tau a_{22} {\mathcal D}_{1}
             & \tau a_{21} {\mathcal D}_{1} & \tau a_{22} {\mathcal D}_{1} \\ \vspace{2mm}
   \beta\tau a_{11} {\mathcal D}_{1}  & \beta\tau a_{12} {\mathcal D}_{1}
             & 1-\nu\tau a_{11} {\mathcal D}_{1} & -\nu\tau a_{12} {\mathcal D}_{1}  \\ \vspace{2mm}
   \beta\tau a_{21} {\mathcal D}_{1} & \beta\tau a_{22} {\mathcal D}_{1}
             & -\nu\tau a_{21} {\mathcal D}_{1} & 1 -\nu\tau a_{22} {\mathcal D}_{1}
  \end{array}
\right]\left[
  \begin{array}{c} \vspace{2mm}
  (k_1^2)^{l+1}\\ \vspace{2mm}
  (k_2^2)^{l+1}\\ \vspace{2mm}
  (k_1^3)^{l+1}\\ \vspace{2mm}
  (k_2^3)^{l+1}
\end{array}
  \right] =\left[
  \begin{array}{c}\vspace{2mm}
   {\mathcal D}_{1} {\bm \chi}_1^{n,l} \\ \vspace{2mm}
   {\mathcal D}_{1}{\bm \chi}_2^{n,l} \\ \vspace{2mm}
   {\mathcal D}_{1}{\bm \chi}_3^{n,l} \\ \vspace{2mm}
   {\mathcal D}_{1}{\bm \chi}_4^{n,l}
\end{array}
  \right],
\end{align}
where $l=0,1,2,\cdots M-1$ and $M$ is the number of the maximum iteration step.

Using the equations \eqref{SP-D1} and \eqref{SP-D2} and let $\widetilde{\bullet}=\mathcal{F}_N\bullet$, we rewrite \eqref{f-s-new-9} and \eqref{f-s-new-10}, respectively, into
\begin{align*}
&\left[
  \begin{array}{cc}\vspace{2mm}
    1-\text{i}\tau\omega a_{11} \Lambda^{2}_j & -\text{i}\tau \omega a_{12} \Lambda^{2}_j\\
    -\text{i}\tau \omega a_{21} \Lambda^{2}_j & 1-\text{i}\tau \omega a_{22}\Lambda^{2}_j\\
  \end{array}
\right]\left[
  \begin{array}{c} \vspace{2mm}
   \Big(\widetilde{(k_1^1)^{l+1}}\Big)_j\\
  \Big(\widetilde{(k_2^1)^{l+1}}\Big)_j
\end{array}
  \right] =\left[
  \begin{array}{c}\vspace{2mm}
   \Big(\widetilde{{\bm F}_1^{n,l}}\Big)_j\\
   \Big(\widetilde{{\bm F}_2^{n,l}}\Big)_j
\end{array}
  \right]
\end{align*}
and
\begin{align*}
&\left[
  \begin{array}{cccc}\vspace{2mm}
    1- \nu\tau a_{11} \Lambda^{1}_{j}  & - \nu\tau a_{12} \Lambda^{1}_{j}
             & \tau a_{11} \Lambda^{1}_{j} & \tau a_{12} \Lambda^{1}_{j}  \\ \vspace{2mm}
   - \nu\tau a_{21} \Lambda^{1}_{j} & 1- \nu\tau a_{22} \Lambda^{1}_{j}
             & \tau a_{21} \Lambda^{1}_{j} & \tau a_{22} \Lambda^{1}_{j} \\ \vspace{2mm}
   \beta\tau a_{11} \Lambda^{1}_{j}  & \beta\tau a_{12} \Lambda^{1}_{j}
             & 1-\nu\tau a_{11} \Lambda^{1}_{j} & -\nu\tau a_{12} \Lambda^{1}_{j}  \\ \vspace{2mm}
   \beta\tau a_{21} \Lambda^{1}_{j} & \beta\tau a_{22} \Lambda^{1}_{j}
             & -\nu\tau a_{21} \Lambda^{1}_{j} & 1 -\nu\tau a_{22} \Lambda^{1}_{j}
  \end{array}
\right]\left[
  \begin{array}{c} \vspace{2mm}
   \Big(\widetilde{(k_1^2)^{l+1}}\Big)_j\\ \vspace{2mm}
  \Big(\widetilde{(k_2^2)^{l+1}}\Big)_j\\ \vspace{2mm}
  \Big(\widetilde{(k_1^3)^{l+1}}\Big)_j\\ \vspace{2mm}
  \Big(\widetilde{(k_2^3)^{l+1}}\Big)_j
\end{array}
  \right]  =\left[
  \begin{array}{c}\vspace{2mm}
   \Lambda^{1}_{j} \Big(\widetilde{{\bm\chi}_1^{n,l}}\Big)_j \\ \vspace{2mm}
   \Lambda^{1}_{j} \Big(\widetilde{{\bm\chi}_2^{n,l}}\Big)_j \\ \vspace{2mm}
   \Lambda^{1}_{j} \Big(\widetilde{{\bm\chi}_3^{n,l}}\Big)_j \\ \vspace{2mm}
   \Lambda^{1}_{j} \Big(\widetilde{{\bm \chi}_4^{n,l}}\Big)_j
\end{array}
  \right],
\end{align*}
where $j=0,1,2,\cdots,N-1$.

After we obtain $\widetilde{(k_i^1)^{M}}$,
$\widetilde{(k_i^2)^{M}}$ and $\widetilde{(k_i^3)^{M}}$, $(k_i^1)^{M}$, $(k_i^2)^{M}$  and $(k_i^3)^{M}$ are then updated by
$\mathcal{F}_N^{H}\widetilde{(k_i^1)^{M}} $, $\mathcal{F}_N^{H}\widetilde{(k_i^2)^{M}} $
and $\mathcal{F}_N^{H}\widetilde{(k_i^3)^{M}},\ i=1,2$, respectively.
Finally,  ${\bm B}^{n+1},\ {\bm \rho}^{n+1}$ and ${\bm u}^{n+1}$ are obtained by \eqref{new-f-s-1}.

In practical computation, we choose the iterative initial value $(k_i^1)^{0}={\bm B}^n$, $(k_i^2)^{0}= {\bm \rho}^n $,
$(k_i^3)^{0}= {\bm u}^n,\ i=1,2$ and the iteration terminates when the number of maximum iterative step $M=30$ is reached or the infinity norm of the error
between two adjacent iterative steps is less than $10^{-14}$, i.e.,
\begin{align*}
\mathop{\rm max}\limits_{l\leqslant i\leqslant 2}\Big \{\|({k}_i^1)^{l+1}-({k}_i^1)^l \|_{h,\infty}, \
 \| ({k}_i^2)^{l+1}-({k}_i^2)^l \|_{h,\infty},\ \| ({k}_i^3)^{l+1}-({k}_i^3)^l \|_{h,\infty}\Big\}  < 10^{-14}.
\end{align*}
%To summarize, this fast solver   consists of the following steps:
% \begin{itemize}
% \item[Step 1:] ${\bm B}^{n+1}$ is obtained from \eqref{n-f-s-9}, and ${\bm B}_{ni}$  are calculated by \eqref{f-s-5};
% \item[Step 2:] $k_i^4,\ i=1,2$ are calculated by \eqref{f-s-4},   ${\bm \phi}^{n+1}$ and ${\bm \phi}_{ni},\ i=1,2$ are then updated by \eqref{n-f-s-11} and \eqref{f-s-8}, respectively;
% \item[Step 3:] $k_i^2,\ k_i^3,\ i=1,2$ is obtained from \eqref{f-s-2} and \eqref{f-s-3},
%    then ${\bm \rho}^{n+1},  \  {\bm u}^{n+1}$ are the updated by \eqref{n-f-s-10a} and \eqref{n-f-s-10}.
%\end{itemize}

\section{Numerical results}\label{Sec-ZR-5}

In this section, we will  present some numerical examples to investigate the accuracy, computational efficiency and invariants-preservation of the proposed schemes of the ZR equations \eqref{ZR1}. For brevity, in the rest of this paper, the FPRK-2 scheme and FPRK-3 scheme are only used for demonstration purposes.
In order to quantify the numerical solution, we define $L^2$- and $L^{\infty}$-norms of the error between the numerical solution and the exact solution, respectively, as
\begin{align*}
e_{B,2}(t_n)=\|{\bm B}(\cdot, t_{n})-{\bm B}^{n}\|_{h}, \
 e_{\rho,2}(t_n)=\|{\bm \rho}(\cdot, t_{n})-{\bm \rho}^{n}\|_{h}, \
 e_{u,2}(t_n)=\|{\bm u}(\cdot, t_{n})-{\bm u}^{n}\|_{h,},\\
e_{B,\infty}(t_n)=\|{\bm B}(\cdot, t_{n})-{\bm B}^{n}\|_{h,\infty}, \
 e_{\rho,\infty}(t_n)=\|{\bm \rho}(\cdot, t_{n})-{\bm \rho}^{n}\|_{h,\infty}, \
 e_{u,\infty}(t_n)=\|{\bm u}(\cdot, t_{n})-{\bm u}^{n}\|_{h,\infty}.
\end{align*}
Furthermore, we also present the relative residuals on the mass, Hamiltonian energy and two linear invariants,
defined respectively, as
\begin{align*}
RM^n=\Big|\frac{\mathcal M_h^n-\mathcal M_h^0}{\mathcal M_h^0}\Big|,\ RH^n=\Big|\frac{\mathcal H_h^n-\mathcal H_h^0}{\mathcal H_h^0}\Big|,\ RI_i^n=\Big|\frac{\mathcal{I}_{ih}^{n}-\mathcal{I}_{ih}^{0}}{\mathcal{I}_{ih}^{0}}\Big|,\ i=1,2.
\end{align*}
Furthermore, we compare the selected schemes with the CN-FP scheme \eqref{CN-FP-scheme} and the TS-FP scheme, respectively. All simulations are performed on a Win10 machine with Intel Core i7 and 32 GB using MATLAB R2015b.

\subsection{Accuracy test}

The ZR equations \eqref{ZR1} admit a solitary-wave solution given by \cite{Oliveira15,Zhao14jsc}
\begin{align}\label{ZR-solitary}
\begin{aligned}
&B(x, t)=\exp \big\{ {\rm i} \big(\lambda t+\frac{c}{2 \omega}(x-c t)+d_{0}\big)\big\} R(x-ct+x_0), \\
&\rho(x, t)=-\frac{2 c \kappa+\kappa \nu}{2 \beta-2(c+\nu)^{2}}\big|R\big(x-c t+x_{0}\big)\big|^{2}, \\
&u(x, t)=\frac{c \nu \kappa+\kappa \nu^{2}-2 \kappa \beta}{2 \beta-2(c+\nu)^{2}}\big|R\big(x-c t+x_{0}\big)\big|^{2},
	\end{aligned}
\end{align}
where
\begin{align*}
\lambda=\frac{4 \omega^{2} \eta+c^{2}}{4 \omega}, \
\zeta=q+\frac{4 c \nu \kappa + 3\kappa \nu^2 -4\kappa \beta}{4\beta-4(c+\nu)^{2}}, \
R(x)=\sqrt{\frac{2 \omega \eta}{\kappa \zeta}} \operatorname{sech}(\sqrt{\eta} x),
\end{align*}
and $x_{0}$ as well as $d_{0} $ represents the shifts of solitons in space and phase at $t=0$, respectively. Here,  we set computational domain $\Omega=[-64, 64]$, and choose the parameters $\omega=\kappa=v=c=\eta=1, \beta=7, x_{0}=2$ and $d_{0}=0$, respectively. The initial conditions are taken as the exact solution at $t=0$.

First, we test the spatial and temporal discretization error separately. In Table \ref{tabs-1}, we display the spatial $L^{2}$ and $L^{\infty}$-errors of the FPRK-2 scheme with different Fourier nodes at $T=4$, respectively, where a fixed time step size $\tau= 10^{-3}$ is chosen.
It is clear to observe that the spatial errors converge exponentially. We note that the spatial $L^{2}$ and $L^{\infty}$-errors provided by the FPRK-3 scheme with different Fourier nodes at $T=4$ are analogue to the Table \ref{tabs-1}, which is omitted for the sake of brevity.

\begin{table}[ht!]
\caption{Spatial $L^{2}$- and $L^{\infty}$-errors of the FPRK-2 scheme at $T=4$ for the ZR equations \eqref{ZR1} with different Fourier nodes.}\label{tabs-1}
\setlength{\tabcolsep}{3mm}
\centering
\begin{tabular}{cccccccc} \toprule
 Scheme           &                 & $N = 128$ & $N=256 $  & $N=512$ & $N=1024$   \\[0.1ex]
\midrule
                 &  $e_{B,2}(t_n=4)$     & 2.228e-01  & 4.294e-04  &   3.194e-09   &    1.520e-13 \\[1ex]
FPRK-$2$ scheme  &  $e_{\rho,2}(t_n=4)$  & 9.319e-02  & 1.422e-03  &   8.907e-08  &     1.604e-13\\[1ex]
                 &  $e_{u,2}(t_n=4)$     & 1.898e-01  & 2.865e-03  &   1.854e-07   &     5.304e-13 \\[1ex]
\midrule
                 &  $e_{B,\infty}(t_n=4)$     & 6.263e-02  &    1.045e-04  &   1.421e-09  &     9.783e-14\\[1ex]
FPRK-$2$ scheme  &  $e_{\rho,\infty}(t_n=4)$  & 5.363e-02  &    7.768e-04  &   5.328e-08   &     1.261e-13 \\[1ex]
                 &  $e_{u,\infty}(t_n=4)$     & 1.251e-01  &    1.982e-03  &   1.239e-07  &     4.539e-13 \\[1ex]
%\midrule \midrule
%                 &$e_{B,2}(t_n=4)$     &2.228e-01   &4.294e-04   &3.194e-09  &2.066e-14 \\[0.1ex]
%FPRK-$3$ scheme  &$e_{\rho,2}(t_n=4)$  &9.319e-02   &1.422e-03   &8.907e-08  &1.680e-14 \\[0.1ex]
%                 &$e_{u,2}(t_n=4)$     &1.898e-01   &2.865e-03   &1.854e-07  &6.022e-14\\[0.1ex]
\bottomrule
  \end{tabular}
  \end{table}

%\begin{table}[ht!]
%\caption{Spatial $L^{\infty}$-errors of the FPRK-2 scheme and FPRK-3 scheme at $T=4$ for the ZR equation \eqref{ZR1} with different Fourier nodes.}\label{tabs1}
%\setlength{\tabcolsep}{3mm}
%\centering
%\begin{tabular}{cccccccc} \toprule
% Scheme  &    & $N = 128$ & $N=256 $  & $N=512$ & $N=1024$    \\[1ex]
%\midrule
%                 &  $e_{B,\infty}(t_n=4)$     & 6.263e-02  &    1.045e-04  &   1.421e-09  &     9.783e-14\\[0.1ex]
%FPRK-$2$ scheme  &  $e_{\rho,\infty}(t_n=4)$  & 5.363e-02  &    7.768e-04  &   5.328e-08   &     1.261e-13 \\[0.1ex]
%                 &  $e_{u,\infty}(t_n=4)$     & 1.251e-01  &    1.982e-03  &   1.239e-07  &     4.539e-13 \\[0.1ex]
%\midrule \midrule
%                 &$e_{B,\infty}(t_n=4)$     &6.263e-02   & 1.045e-04  &1.421e-09   &9.384e-15 \\[0.1ex]
%FPRK-$3$ scheme  &$e_{\rho,\infty}(t_n=4)$  &5.363e-02   &7.768e-04  &5.328e-08    & 1.410e-14\\[0.1ex]
%                 &$e_{u,\infty}(t_n=4)$     &1.251e-01   &1.982e-03   & 1.239e-07   &4.818e-14\\[0.1ex]
%\bottomrule
%  \end{tabular}
%  \end{table}
%%%%%%%%%%%%%
To test the temporal discretization errors of the numerical schemes, we fix the
Fourier node 2048 so that spatial errors play no role here. The temporal $L^2$- and $L^{\infty}$-errors of the FPRK-2 scheme and FPRK-3 scheme with different time step sizes are  summarized in Tables \ref{tabt1} and \ref{tabt2}, respectively, which shows the FPRK-2 scheme and FPRK-3 scheme are fourth-order and sixth-order accurate in time, respectively.

\begin{table}[!ht]
\caption{Temporal $L^2$- and $L^{\infty}$-errors of the FPRK-2 scheme at $T=4$ for the ZR equations \eqref{ZR1} with different time step sizes.}\label{tabt1}
\setlength{\tabcolsep}{3mm}
\centering
\begin{tabular}{cccccccc} \toprule
 $\tau$                & 1/10    & 1/20& 1/40  & 1/80 \\
\midrule
                   $e_{B,2}(t_n=4)$     &1.571e-05      &9.844e-07    &6.157e-08    &3.849e-09    \\
                   rate                 & -              &3.996         & 3.999      & 4.000         \\
\midrule
                   $e_{\rho,2}(t_n=4)$  &  1.571e-05    &9.919e-07     &6.215e-08   &3.887e-09    \\
                   rate                 & -              &3.986          &3.996        & 3.999      \\
\midrule
                   $e_{u,2}(t_n=4)$     & 5.210e-05     &3.273e-06    &2.049e-07    &1.281e-08       \\
                   rate                 & -              & 3.992        &3.998         & 4.000       \\
\bottomrule
\midrule
                   $e_{B,\infty}(t_n=4)$      & 1.049e-05    &6.493e-07    &4.043e-08     &2.533e-09     \\
                   rate                       & -             & 4.014         &   4.006        &4.000        \\
\midrule
                   $e_{\rho,\infty}(t_n=4)$   & 1.234e-05    &7.872e-07     & 4.949e-08    &3.093e-09     \\
                   rate                       & -             &3.970          &3.992          & 3.998      \\
\midrule
                   $e_{u,\infty}(t_n=4)$      & 4.556e-05    &2.864e-06     &1.793e-07      &1.121e-08     \\
                   rate                       & -             &3.992          &3.998            & 4.000        \\
\bottomrule
  \end{tabular}
\end{table}
%%%%%%%%%%%%%

%%%%%%%%%%%

\begin{table}[!ht]
\caption{Temporal $L^2$- and $L^{\infty}$-errors of the FPRK-3 scheme at $T=4$ for the ZR equations \eqref{ZR1} with different time step sizes.}\label{tabt2}
\setlength{\tabcolsep}{3mm}
\centering
\begin{tabular}{ccccccc} \toprule
$\tau$                                     &1/20 & 1/40  & 1/80   \\
\midrule
             $e_{B,2}(t_n=4)$           & 4.346e-10   & 6.010e-12  &9.207e-14    \\
             rate                       & -            & 6.176       &6.028          \\
\midrule
             $e_{\rho,2}(t_n=4)$        &5.571e-10    &8.602e-12   &1.345e-13   \\
             rate                       & -            &  6.017       &5.999       \\
\midrule
             $e_{u,2}(t_n=4)$           & 1.066e-09   &1.677e-11  &2.634e-13    \\
             rate                       & -            &  5.990     & 5.992         \\
\bottomrule
\midrule
             $e_{B,\infty}(t_n=4)$      &3.522e-10 &5.346e-12  & 7.997e-14       \\
             rate                       & -         &6.042       & 6.063          \\
\midrule
             $e_{\rho,\infty}(t_n=4)$   &6.493e-10 &9.865e-12  & 1.545e-13     \\
             rate                       & -         & 6.041      &  5.996         \\
\midrule
              $e_{u,\infty}(t_n=4)$     & 9.499e-10 &1.489e-11  &2.334e-13     \\
              rate                      & -         &   5.996     & 5.995          \\
\bottomrule
  \end{tabular}
\end{table}

Then, we investigated the numerical $L^{\infty}$-errors of $B$ at $T=4$ versus the CPU times using the FPRK-2 scheme, the FPRK-3 scheme, the TS-FP scheme and the CN-FP scheme with the Fourier node 2048. The results are summarized in Table \ref{Error-CPU}. As illustrated, for a given global error, the proposed high-order energy-preserving schemes spend much less CPU time than the TS-FP scheme and the CN-FP scheme, and the TS-FP scheme is much cheaper than the CN-FP scheme.
\begin{table}[!ht]
\caption{The $L^{\infty}$-errors of $B$ (i.e., $e_{B,\infty}(t_n=4)$) versus total CPU time using the different numerical
schemes solving the ZR equations \eqref{ZR1}.}\label{Error-CPU}
\setlength{\tabcolsep}{3mm}
\centering
 \begin{tabular}{ccccccc} \toprule
             Scheme           &$\tau$      & $L^{\infty}$-error  & CPU time (s)     \\
\midrule
             FPRK-2 scheme     &$1.25\times 10^{-2}$ & 2.533e-09& 2.55       \\
\midrule
             FPRK-3 scheme     &$6.25\times 10^{-2}$ &1.360e-09&2.20      \\
\midrule
             TS-FP scheme     &$2\times 10^{-5}$ &2.351e-09&111.01  \\
\midrule
             CN-FP scheme     &$4\times 10^{-5}$&2.104e-09&161.75 \\
\bottomrule
  \end{tabular}
\end{table}

Subsequently, we first treat the numerical solution of the FPRK-3 scheme on a finer grid: the time step $\tau=0.01$ and Fourier node 1024 at $T=1000$ as the ``reference solution".  We then investigate the robustness of the proposed schemes in simulating evolution of the soliton at $T=1000$ as a large time step $\tau= 0.25$, and the results are summarized in Figures \ref{Rubost-TS-FP}-\ref{Rubost-FPRK-3}, which can observed that the computational results provided by the TS-FP scheme and the CN-FP scheme are wrong (see Figures  \ref{Rubost-TS-FP} and \ref{Rubost-CN-FP}) and our new schemes simulate the soliton well (see Figures \ref{Rubost-FPRK-2} and \ref{Rubost-FPRK-3}). Moreover, we also investigate the robustness of the CN-FP scheme and the TS-FP scheme on a finer time step in long time computations and the results are summarized in Figures \ref{nRubost-TS-FP} and \ref{nRubost-CN-FP}. It is clear to see that for a finer time step, the CN-FP scheme can simulate the evolution of the soliton well, while the numerical solution provided by the TS-FP scheme is wrong because of the explicit discretezation  for the nonlinear terms of the last two equations of \eqref{ZR1}.  We note the ``reference solution" is also obtained by the FPRK-3 scheme with the time step $\tau=0.01$ and the Fourier node 1024 at $T=2000$.
%We observe that TS-FP scheme and classical CN method  with a finer time step

Finally, we investigate the residuals on the three invariants: mass, Hamiltonian energy and two linear invariants from $t=0$ and $t=1000$ using the FPRK-2 scheme and FPRK-3 scheme, respectively and the results summarized in Figure \ref{Exa1-errors}. It is clear to see that the proposed schemes can conserve the three invariants exactly, which agrees with the theoretical analysis.

\begin{figure}[h!]
\begin{minipage}[t]{0.33\linewidth}
\centering
\includegraphics[height=4.5cm,width=5cm]{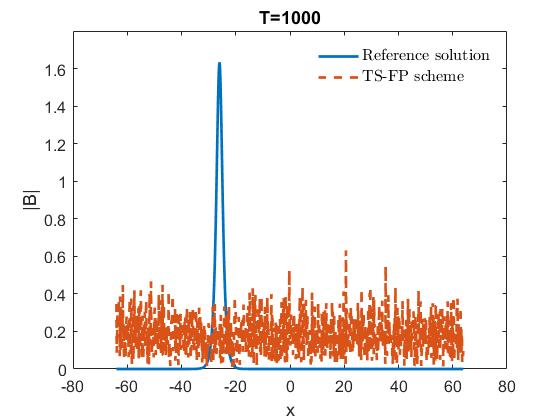}
\end{minipage}%
\begin{minipage}[t]{0.33\linewidth}
\centering
\includegraphics[height=4.5cm,width=5cm]{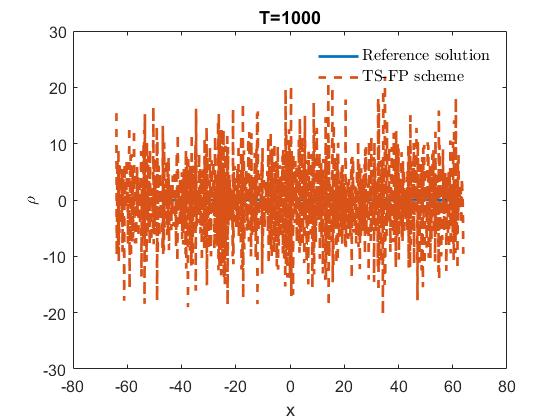}
\end{minipage}
\begin{minipage}[t]{0.33\linewidth}
\centering
\includegraphics[height=4.5cm,width=5cm]{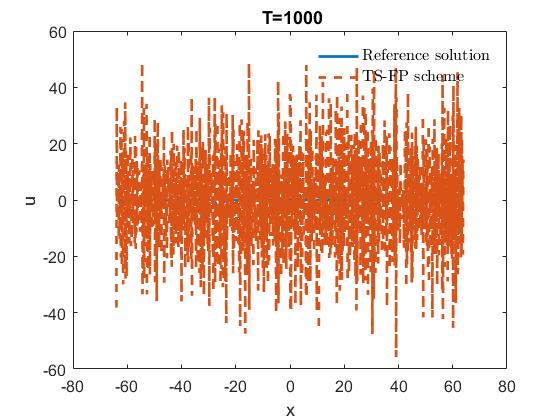}
\end{minipage}%
\vspace{-3mm}
\caption{The profile of $B,\ \rho$ and $u$ at $T = 1000$ provided by the TS-FP scheme for the ZR equations \eqref{ZR1} with the time step $\tau = 0.25$ and the Fourier node 1024.}\label{Rubost-TS-FP}
\end{figure}

\begin{figure}[H]
\begin{minipage}[t]{0.33\linewidth}
\centering
\includegraphics[height=4.5cm,width=5cm]{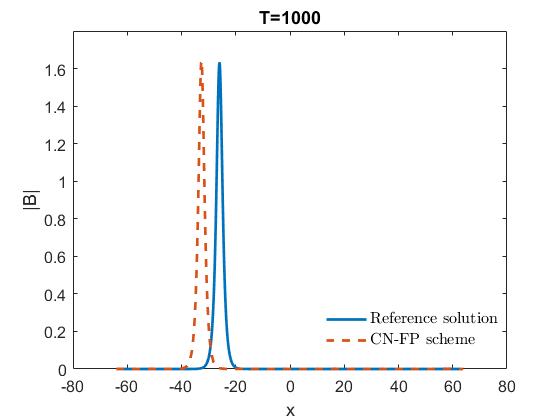}
\end{minipage}%
\begin{minipage}[t]{0.33\linewidth}
\centering
\includegraphics[height=4.5cm,width=5cm]{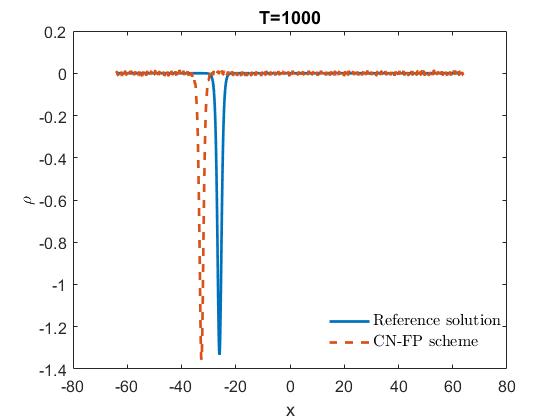}
\end{minipage}
\begin{minipage}[t]{0.33\linewidth}
\centering
\includegraphics[height=4.5cm,width=5cm]{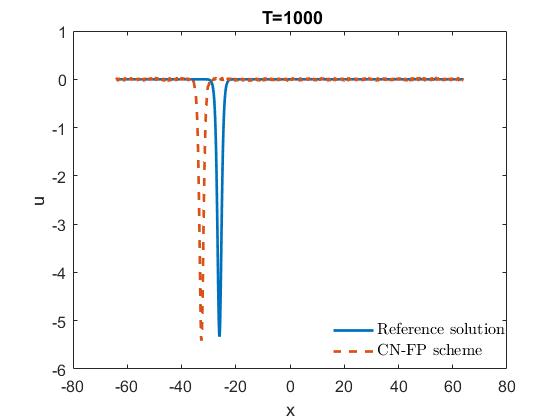}
\end{minipage}%
\vspace{-3mm}
\caption{The profile of $B,\ \rho$ and $u$ at $T = 1000$ provided by the CN-FP scheme for the  ZR equations \eqref{ZR1} with the time step $\tau = 0.25$ and the Fourier node 1024.}\label{Rubost-CN-FP}
\end{figure}

\begin{figure}[h!]
\begin{minipage}[t]{0.33\linewidth}
\centering
\includegraphics[height=4.5cm,width=5cm]{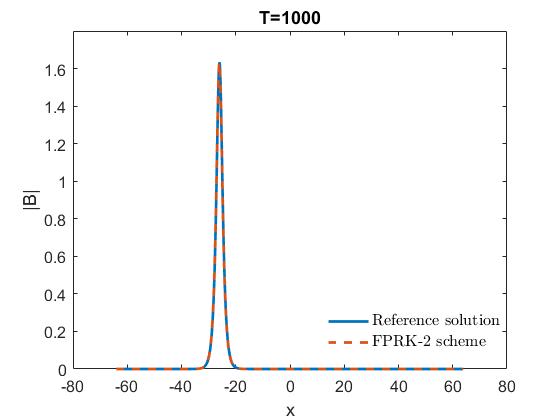}
\end{minipage}%
\begin{minipage}[t]{0.33\linewidth}
\centering
\includegraphics[height=4.5cm,width=5cm]{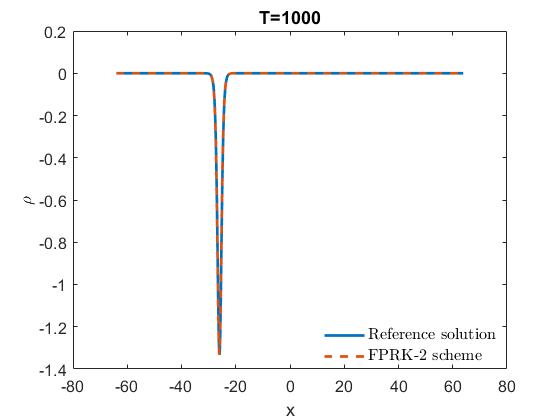}
\end{minipage}
\begin{minipage}[t]{0.33\linewidth}
\centering
\includegraphics[height=4.5cm,width=5cm]{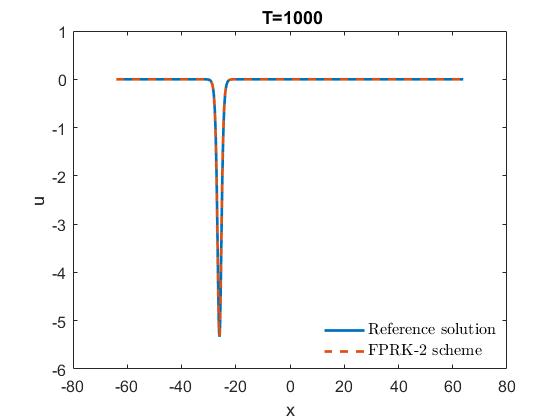}
\end{minipage}%
\vspace{-3mm}
\caption{The profile of $B,\ \rho$ and $u$ at $T = 1000$ provided by the FPRK-2 scheme for the  ZR equations \eqref{ZR1} with the time step $\tau = 0.25$ and the Fourier node 1024.}\label{Rubost-FPRK-2}
\end{figure}

\begin{figure}[h!]
\begin{minipage}[t]{0.33\linewidth}
\centering
\includegraphics[height=4.5cm,width=5cm]{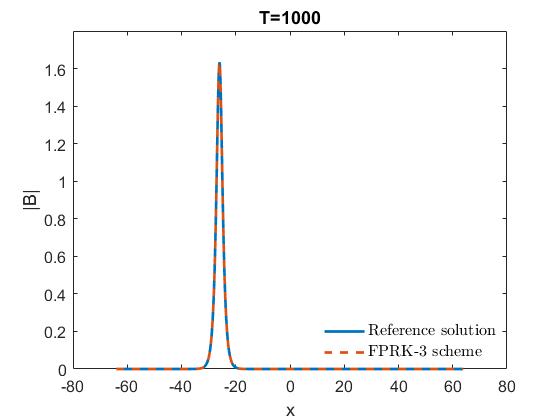}
\end{minipage}%
\begin{minipage}[t]{0.33\linewidth}
\centering
\includegraphics[height=4.5cm,width=5cm]{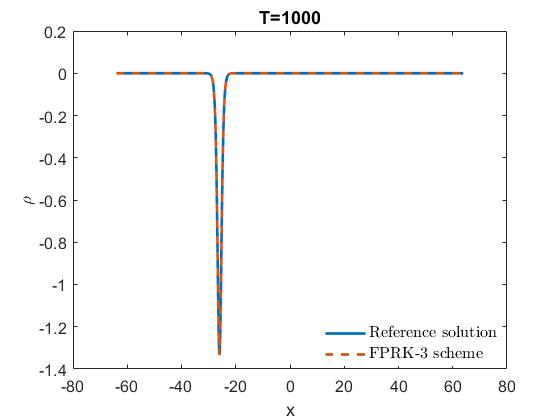}
\end{minipage}
\begin{minipage}[t]{0.33\linewidth}
\centering
\includegraphics[height=4.5cm,width=5cm]{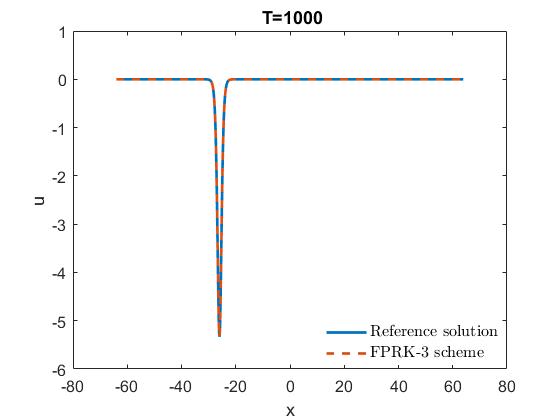}
\end{minipage}%
\vspace{-3mm}
\caption{The profile of $B,\ \rho$ and $u$ at $T = 1000$ provided by the FPRK-3 scheme for the ZR equations \eqref{ZR1} with the time step $\tau = 0.25$ and the Fourier node 1024.}\label{Rubost-FPRK-3}
\end{figure}

\begin{figure}[h!]
\begin{minipage}[t]{0.33\linewidth}
\centering
\includegraphics[height=4.5cm,width=5cm]{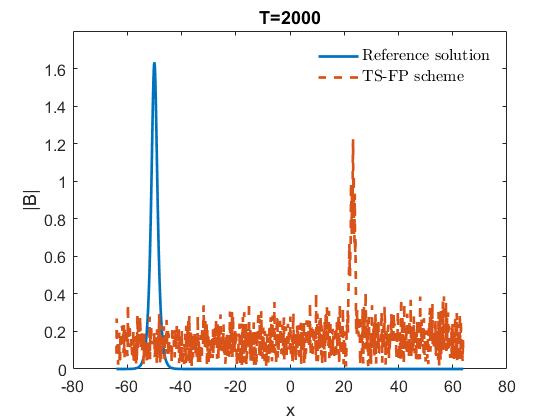}
\end{minipage}%
\begin{minipage}[t]{0.33\linewidth}
\centering
\includegraphics[height=4.5cm,width=5cm]{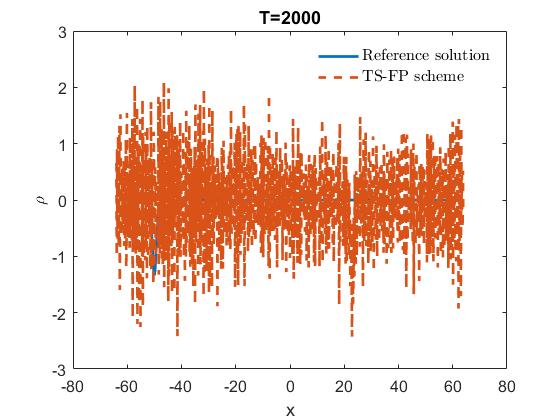}
\end{minipage}
\begin{minipage}[t]{0.33\linewidth}
\centering
\includegraphics[height=4.5cm,width=5cm]{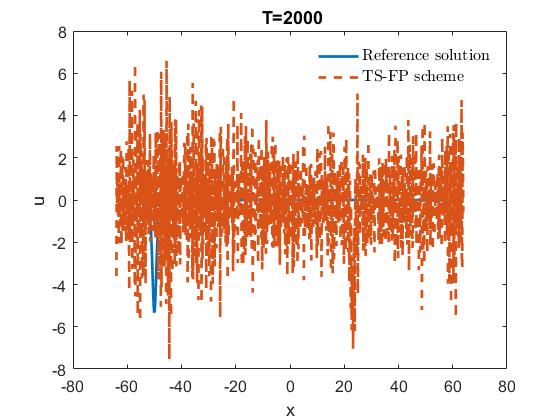}
\end{minipage}%
\vspace{-3mm}
\caption{The profile of $B,\ \rho$ and $u$ at $T = 2000$ provided by the TS-FP scheme for the ZR equations \eqref{ZR1} with the time step $\tau = 0.008$ and the Fourier node 1024.}\label{nRubost-TS-FP}
\end{figure}

\begin{figure}[H]
\begin{minipage}[t]{0.33\linewidth}
\centering
\includegraphics[height=4.5cm,width=5cm]{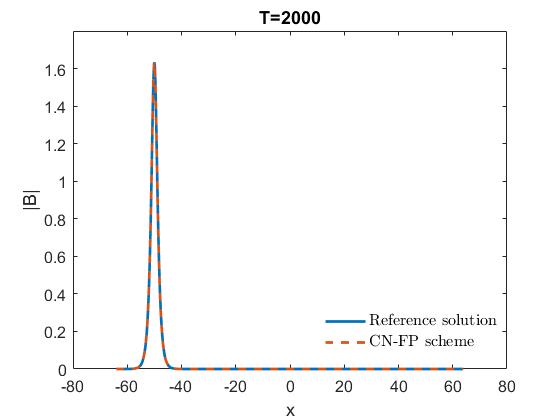}
\end{minipage}%
\begin{minipage}[t]{0.33\linewidth}
\centering
\includegraphics[height=4.5cm,width=5cm]{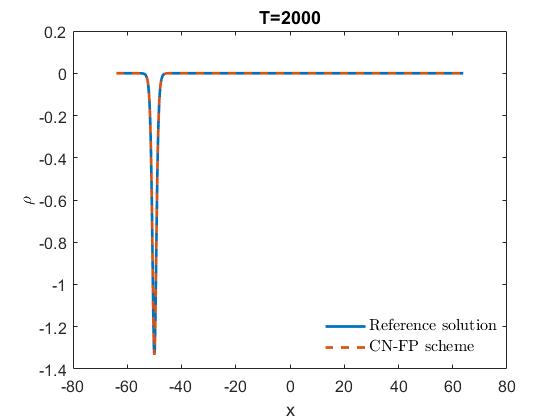}
\end{minipage}
\begin{minipage}[t]{0.33\linewidth}
\centering
\includegraphics[height=4.5cm,width=5cm]{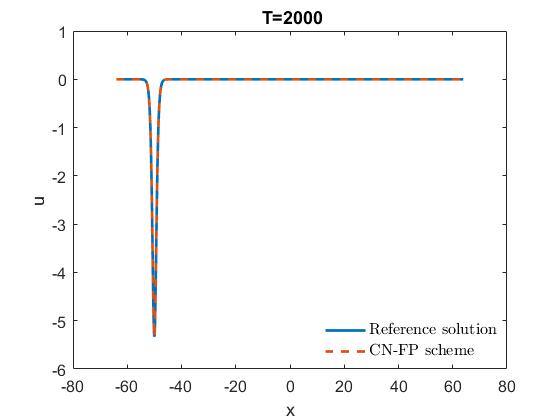}
\end{minipage}%
\vspace{-3mm}
\caption{The profile of $B,\ \rho$ and $u$ at $T = 2000$ provided by the CN-FP scheme for the ZR equations \eqref{ZR1} with the time step $\tau = 0.05$ and the Fourier node 1024.}\label{nRubost-CN-FP}
\end{figure}

\begin{figure}[H]
\centering
\subfigure{
\begin{minipage}[t]{0.5\linewidth}
\centering
\includegraphics[height=4cm,width=6.0cm]{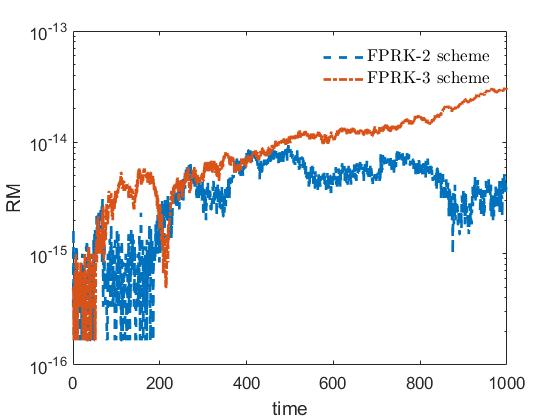}
\end{minipage}%
}%
\subfigure{
\begin{minipage}[t]{0.5\linewidth}
\centering
\includegraphics[height=4cm,width=6.0cm]{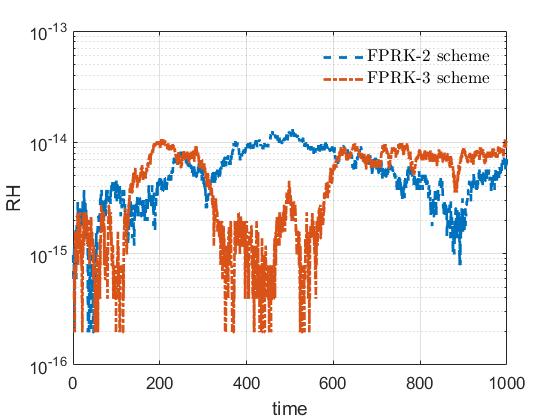}
\end{minipage}%
}%
\\
\subfigure{
\begin{minipage}[t]{0.5\linewidth}
\centering
\includegraphics[height=4cm,width=6.0cm]{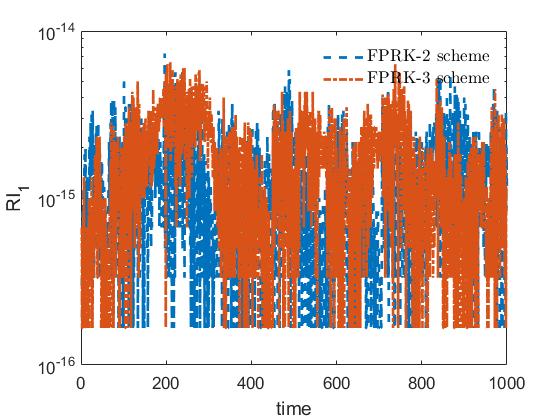}
\end{minipage}%
}%
\subfigure{
\begin{minipage}[t]{0.5\linewidth}
\centering
\includegraphics[height=4cm,width=6.0cm]{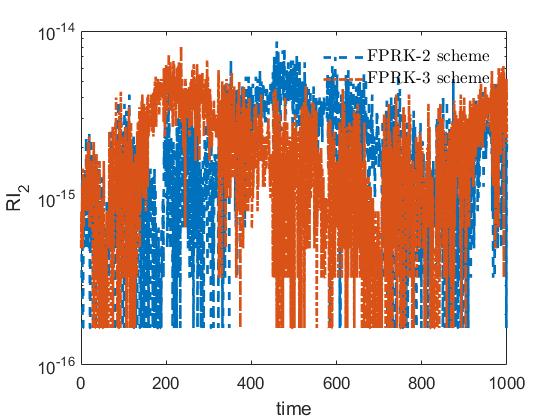}
\end{minipage}%
}%
\\
\caption{The relative residuals on the mass, Hamiltonian energy and two linear invariants over the time interval $t\in[0,1000]$ with the time step $\tau=0.01$ and the Fourier node 1024.}\label{Exa1-errors}
\end{figure}

\subsection{Interactions of solitons}
 The initial data are then chosen as  (cf. \cite{Zhao14jsc})
\begin{align*}
\label{ZR-collisions}
\begin{aligned}
&B_{0}(x)=\exp \big\{ {\rm i} \big( \frac{c_{+}}{2 \omega} x +d_{0}\big)\big\} R_{+}\big(x +x_{0+}\big) + \exp \big\{ {\rm i} \big( \frac{c_{-}}{2 \omega} x +d_{0}\big)\big\} R_{-}\big(x +x_{0-}\big), \\
&\rho_{0}(x)=-\frac{2 c_{+} \kappa+\kappa \nu}{2 \beta-2\big(c_{+}+\nu\big)^{2}}\big|R_{+}\big(x+x_{0+}\big)\big|^{2}-\frac{2 c_{-} \kappa+\kappa \nu}{2 \beta-2\big(c_{-}+\nu\big)^{2}}\big|R_{-}\big(x+x_{0-}\big)\big|^{2}, \\
&u_{0}(x)=\frac{c_{+} \nu \kappa+\kappa \nu^{2}-2 \kappa \beta}{2 \beta-2\big(c_{+}+\nu\big)^{2}}\big|R_{+}\big(x+x_{0 +}\big)\big|^{2} + \frac{c_{-}\nu \kappa+\kappa \nu^{2}-2 \kappa \beta}{2 \beta-2\big(c_{-}+\nu\big)^{2}}\big|R_{-}\big(x+x_{0 -}\big)\big|^{2},  \   x \in\Omega,
	\end{aligned}
\end{align*}
where $c_{ \pm}$ and $x_{0 \pm}$ are different velocities and different  initial locations of the two solitary waves, respectively,  $\eta_{\pm}>0$, and
\begin{align*}
\lambda_{\pm}=\frac{4 \omega^{2} \eta_{\pm}+c_{\pm}^{2}}{4 \omega}, \  \zeta_{\pm}=q+\frac{4 c_{\pm} \nu \kappa+3 \kappa \nu^{2}-4\kappa \beta}{4\beta-4(c_{\pm}+\nu)^{2}}, \
R_{\pm}(x)=\sqrt{\frac{2\omega\eta_{\pm}}{\kappa\zeta_{\pm}}}\operatorname{sech}\big(\sqrt{\eta_{\pm}} x\big).
\end{align*}

We take the parameters, respectively, as:
\begin{itemize}
\item Case I: High velocity case:
\begin{align*}
\omega &=\eta_{\pm}=1, \   \kappa=2, \   \nu=0.2, \   c_{\pm}=\pm 8, \   x_{0 \pm}=\pm 8, \   \beta=75, \   d_{0}=0.
\end{align*}

\item Case II: Intermediate velocity case:
\begin{align*}
\omega & =\eta_{\pm}=1, \   \kappa=3, \   \nu=0.2, \   c_{\pm}=\pm 1.5, \   x_{0 \pm}=\pm 9,\    \beta=12, \   d_{0}=0.
\end{align*}

\item Case III: Small velocity case:
\begin{align*}
\omega & =\eta_{\pm}=1,\    \kappa=1, \   \nu=0.5, \   c_{+}=0, \   c_{-}=-0.5, \   x_{0+}=-8, \   x_{0-}=-26, \\
\beta & =3, \   d_{0}=0.
\end{align*}
\end{itemize}

Furthermore, all numerical calculations are performed using the numerical schemes with the time step $\tau=1/200$ and the spatial mesh size $h={1}/{8}$. The profiles and contour plots of collisions for $B,\rho$ as well as $u$ for the Cases I-III provided by the FPRK-2 scheme are shown in Figures \ref{interac1}-\ref{interac3},
respectively. We can observe that all the collisions between two solitons are not elastic. Moreover, From Figures \ref{interac2}-\ref{interac3}, we can observe that some little dispersive waves are produced throughout the collision or following the collision, which implies that the ZR equations \eqref{ZR1} is a non-integrable system \cite{Zhao14jsc,Oruc21AMC,WangZhang19acm}. The results are completely in good agreement with those presented in literatures \cite{Oruc21AMC,Zhao14jsc,WangZhang19acm}. Here, we point out that the profile and contour plots of collisions for $B,\rho$ as well as $u$ calculated using the FPRK-3 scheme for the Cases I-III are similar to the Figures \ref{interac1}-\ref{interac3}, thus for brevity, we omit them. In Figures \ref{Exa2-caseI-errors}-\ref{Exa2-caseIII-errors}, we investigate the residuals on the three invariants: mass, Hamiltonian energy and two linear invariants using the FPRK-2 scheme and FPRK-3 scheme, respectively. It is obvious that the residuals on the three invariants are preserved to
machine round-off error.

\section{Conclusion}\label{Sec-ZR-6}

In this paper, we propose a novel high-order energy-preserving scheme extending the classical Crank-Nicolson proposed method in \cite{Zhao14jsc,WangZhang19acm} to high order.
 Based on the idea of the QAV approach, we first transform the Hamiltonian energy into a quadratic from by introducing a quadratic auxiliary variable, and the original system is then reformulated into a new equivalent system using the energy variational principle. Finally, a fully-discrete scheme is presented by using the symplectic RK method in time and the
Fourier pseudo-spectral method in space for the reformulated system. We show that the proposed scheme can preserve the three invariants \eqref{CONmass}-\eqref{CONI1} of the ZR equations \eqref{ZR1} exactly. In addition, an efficient iterative solver is presented to solve the discrete nonlinear equations of our scheme. Numerical results show that the proposed scheme has more advantages than the TS-FP scheme \cite{Zhao14jsc} and CN-FP scheme \cite{WangZhang19acm} as the large time  step is chosen. However, we should note that the proposed scheme is fully-implicit. Thus, an interesting topic for future studies is whether it is possible to construct high-order linearly
implicit schemes which can preserve the three invariants \eqref{CONmass}-\eqref{CONI1} of the ZR equations \eqref{ZR1} exactly.

\section*{Acknowledgments}
The authors would like to express sincere gratitude to the referees for their insightful comments and suggestions. The work is supported by the National Natural Science Foundation of China (Grant Nos.11901513, 12261097, 12261103), and the Yunnan Fundamental Research Projects (Grant Nos. 202101AT070208,  202001AT070066, 202101AS070044), the Natural Science Foundation of Hunan (Grant No. 2021JJ40655) and Innovation team of School of Mathematics and Statistics, Yunnan University (No.
ST20210104). The first author is in particular grateful to Prof. Weizhu Bao for fruitful discussions.

%%%%%%%%%%%%%%%%%%%%%%%%%%%%%%%%%%%
\begin{figure}[H]
\centering
\subfigure[$|B(x,t)|$]{
\begin{minipage}[t]{0.5\linewidth}
\centering
\includegraphics[height=4.5cm,width=6.0cm]{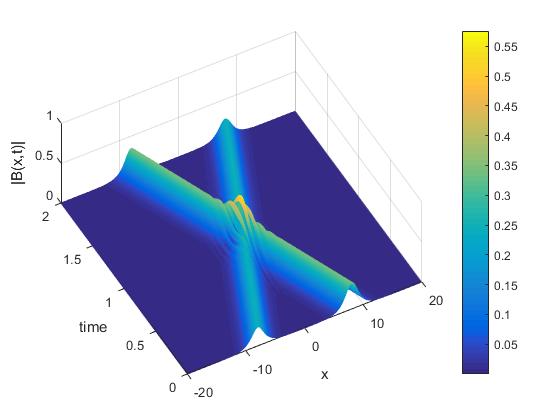}
\end{minipage}%
}%
\subfigure[$|B(x,t)|$]{
\begin{minipage}[t]{0.5\linewidth}
\centering
\includegraphics[height=4.5cm,width=6.0cm]{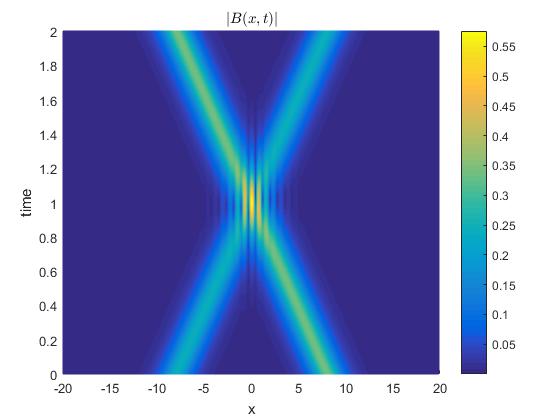}
\end{minipage}%
}%
\\
\subfigure[$\rho(x,t)$]{
\begin{minipage}[t]{0.5\linewidth}
\centering
\includegraphics[height=4.5cm,width=6.0cm]{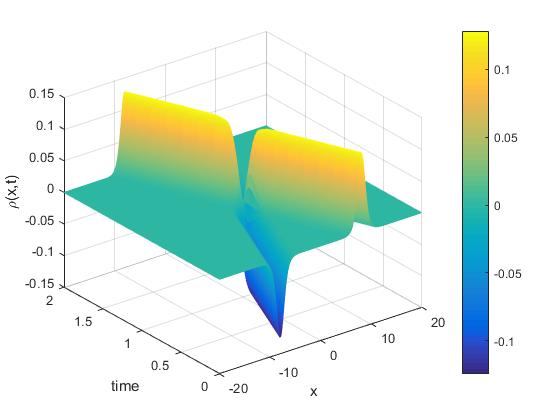}
\end{minipage}%
}%
\subfigure[$\rho(x,t)$]{
\begin{minipage}[t]{0.5\linewidth}
\centering
\includegraphics[height=4.5cm,width=6.0cm]{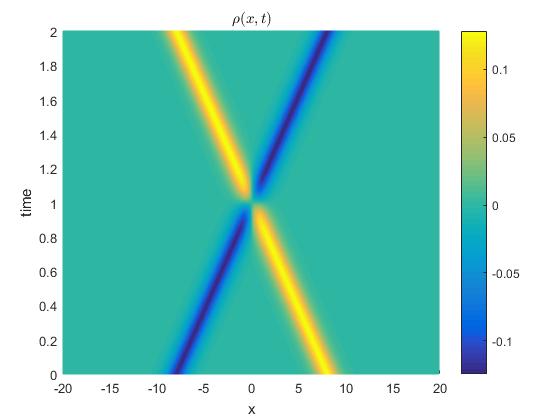}
\end{minipage}%
}%
\\
\subfigure[$u(x,t)$]{
\begin{minipage}[t]{0.5\linewidth}
\centering
\includegraphics[height=4.5cm,width=6.0cm]{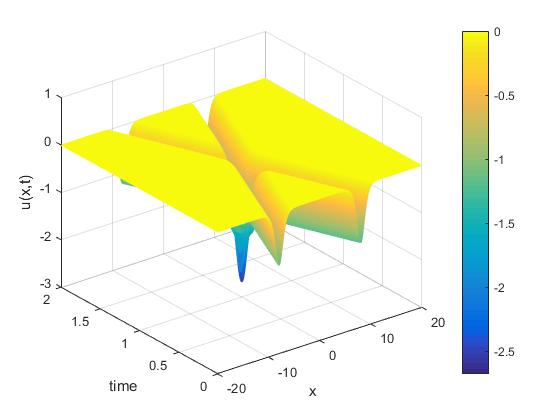}
\end{minipage}
}%
\subfigure[$u(x,t)$]{
\begin{minipage}[t]{0.5\linewidth}
\centering
\includegraphics[height=4.5cm,width=6.0cm]{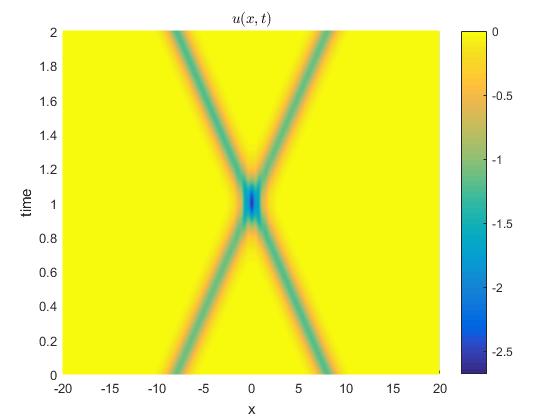}
\end{minipage}
}%
 \\
\centering
\caption{ Inelastic collision between two solitons provided by the FPRK-2 scheme for the ZR equations \eqref{ZR1} under Case I, where the computational is chosen as  $\Omega=[-20,20]$.}\label{interac1}
\end{figure}
%%%%%%%%%%%%%%%%%%%%%%%%%%%%%%%%%%%%%%%

%%%%%%%%%%%%%%%%%%%%%%%%%%%%%%%%%%%
\begin{figure}[H]
\centering
\subfigure[$|B(x,t)|$]{
\begin{minipage}[t]{0.5\linewidth}
\centering
\includegraphics[height=5.5cm,width=6.0cm]{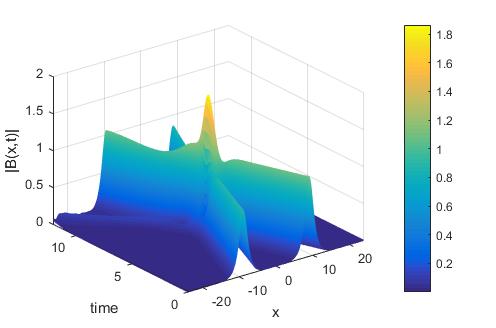}
\end{minipage}%
}%
\subfigure[$|B(x,t)|$]{
\begin{minipage}[t]{0.5\linewidth}
\centering
\includegraphics[height=5.5cm,width=6.0cm]{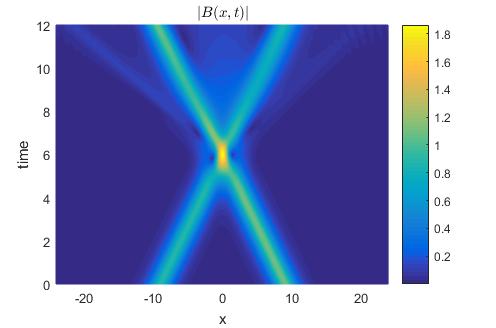}
\end{minipage}%
}%
\\
\subfigure[$\rho(x,t)$]{
\begin{minipage}[t]{0.5\linewidth}
\centering
\includegraphics[height=5.5cm,width=6.0cm]{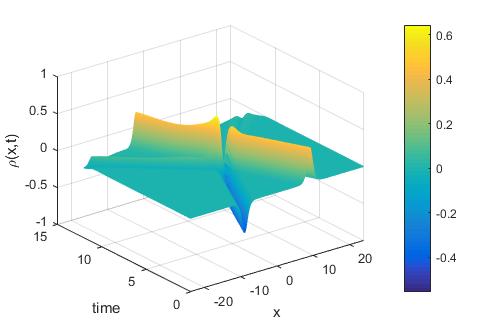}
\end{minipage}%
}%
\subfigure[$\rho(x,t)$]{
\begin{minipage}[t]{0.5\linewidth}
\centering
\includegraphics[height=5.5cm,width=6.0cm]{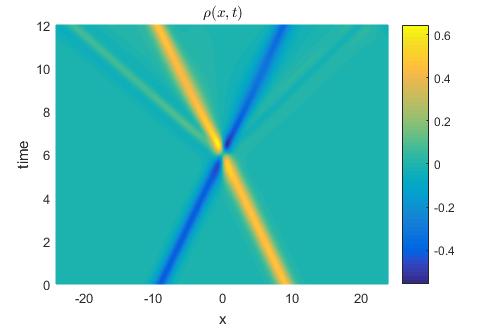}
\end{minipage}%
}%
\\
\subfigure[$u(x,t)$]{
\begin{minipage}[t]{0.5\linewidth}
\centering
\includegraphics[height=5.5cm,width=6.0cm]{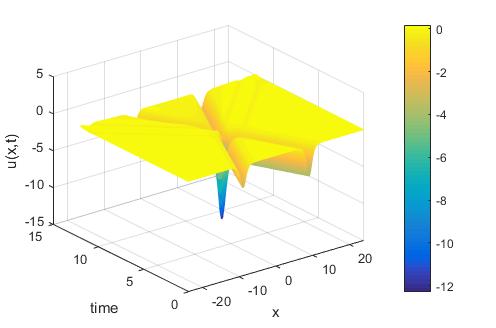}
\end{minipage}
}%
\subfigure[$u(x,t)$]{
\begin{minipage}[t]{0.5\linewidth}
\centering
\includegraphics[height=5.5cm,width=6.0cm]{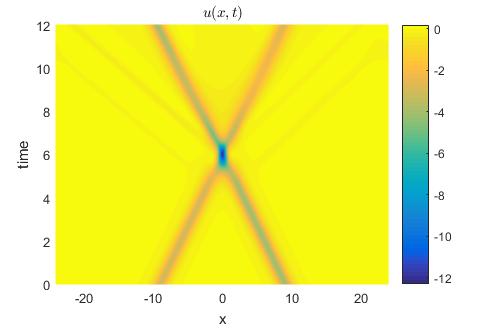}
\end{minipage}
}%
 \\
\centering
\caption{Inelastic collision between two solitons provided by the FPRK-2 scheme for the ZR equations \eqref{ZR1} under Case II, where the computational is chosen as $\Omega=[-24,24]$.}\label{interac2}
\end{figure}
%%%%%%%%%%%%%%%%%%%%%%%%%%%%%%%%%%%%%%%

%%%%%%%%%%%%%%%%%%%%%%%%%%%%%%%%%%%
\begin{figure}[H]
\centering
\subfigure[$|B(x,t)|$]{
\begin{minipage}[t]{0.5\linewidth}
\centering
\includegraphics[height=5.5cm,width=6.0cm]{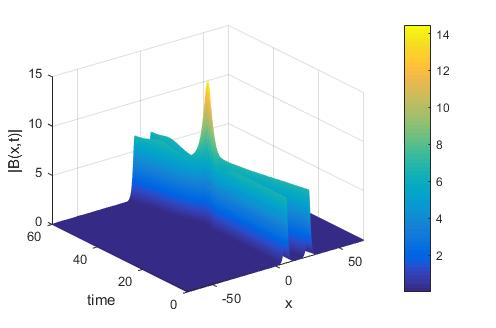}
\end{minipage}%
}%
\subfigure[$|B(x,t)|$]{
\begin{minipage}[t]{0.5\linewidth}
\centering
\includegraphics[height=5.5cm,width=6.0cm]{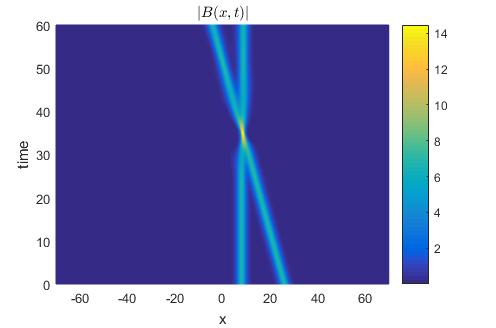}
\end{minipage}%
}%
\\
\subfigure[$\rho(x,t)$]{
\begin{minipage}[t]{0.5\linewidth}
\centering
\includegraphics[height=5.5cm,width=6.0cm]{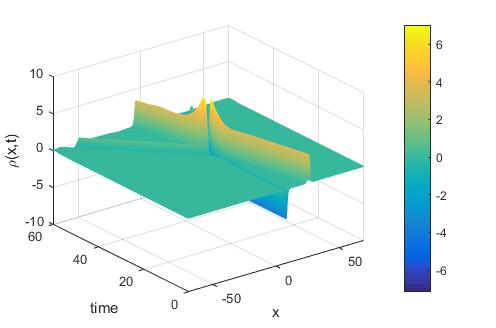}
\end{minipage}%
}%
\subfigure[$\rho(x,t)$]{
\begin{minipage}[t]{0.5\linewidth}
\centering
\includegraphics[height=5.5cm,width=6.0cm]{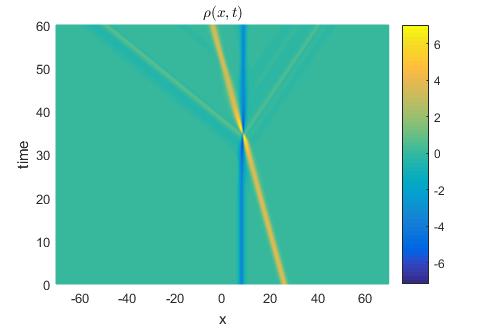}
\end{minipage}%
}%
\\
\subfigure[$u(x,t)$]{
\begin{minipage}[t]{0.5\linewidth}
\centering
\includegraphics[height=5.5cm,width=6.0cm]{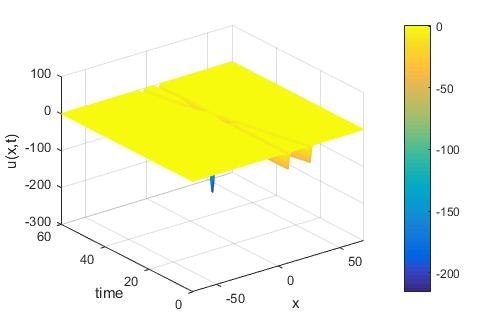}
\end{minipage}
}%
\subfigure[$u(x,t)$]{
\begin{minipage}[t]{0.5\linewidth}
\centering
\includegraphics[height=5.5cm,width=6.0cm]{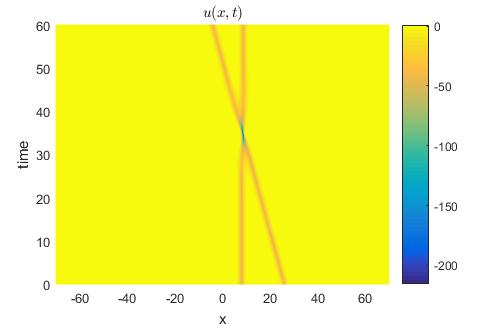}
\end{minipage}
}%
 \\
\centering
\caption{Inelastic collision between two solitons provided by the FPRK-2 scheme for the ZR equations \eqref{ZR1} under Case III, where the computational is chosen as $\Omega=[-70,70]$.}\label{interac3}
\end{figure}
%%%%%%%%%%%%%%%%%%%%%%%%%%%%%%%%%%%%%%%

\begin{figure}[H]
\centering
\subfigure{
\begin{minipage}[t]{0.5\linewidth}
\centering
\includegraphics[height=4.5cm,width=6.0cm]{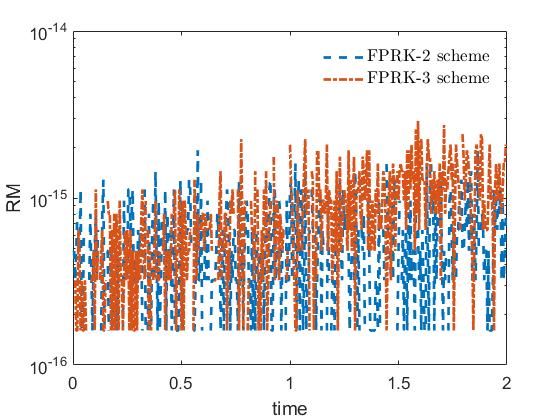}
\end{minipage}%
}%
\subfigure{
\begin{minipage}[t]{0.5\linewidth}
\centering
\includegraphics[height=4.5cm,width=6.0cm]{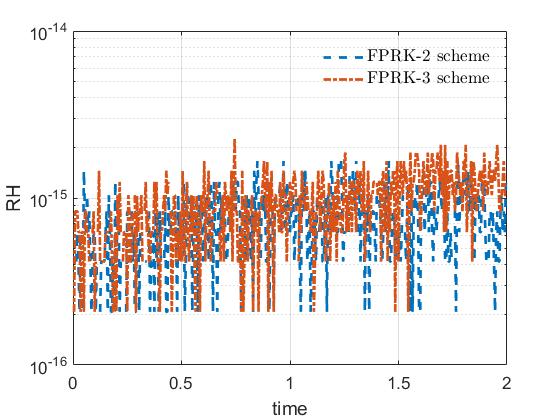}
\end{minipage}%
}%
\\
\subfigure{
\begin{minipage}[t]{0.5\linewidth}
\centering
\includegraphics[height=4.5cm,width=6.0cm]{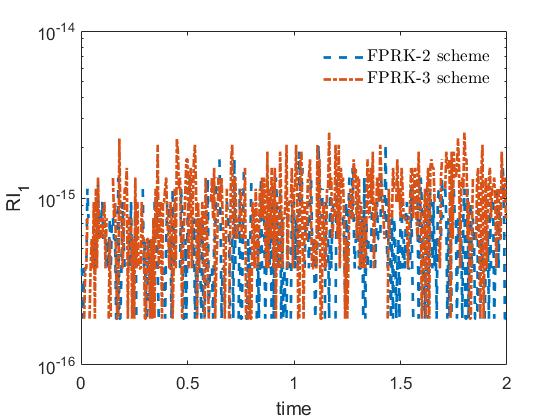}
\end{minipage}%
}%
\subfigure{
\begin{minipage}[t]{0.5\linewidth}
\centering
\includegraphics[height=4.5cm,width=6.0cm]{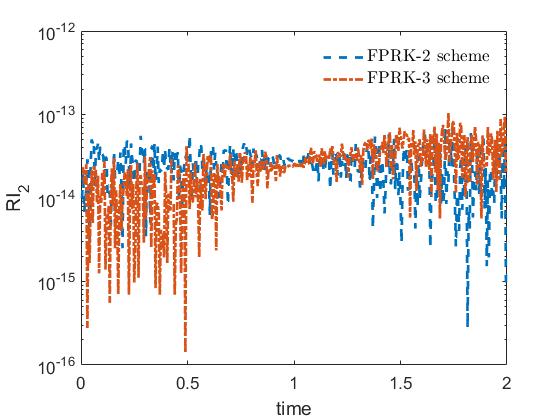}
\end{minipage}%
}%
\\
\centering
\caption{The relative residuals on the mass, Hamiltonian energy and two linear invariants over the time interval $t\in[0,2]$ under Case I.}\label{Exa2-caseI-errors}
\end{figure}

\begin{figure}[H]
\centering
\subfigure{
\begin{minipage}[t]{0.5\linewidth}
\centering
\includegraphics[height=4.5cm,width=6.0cm]{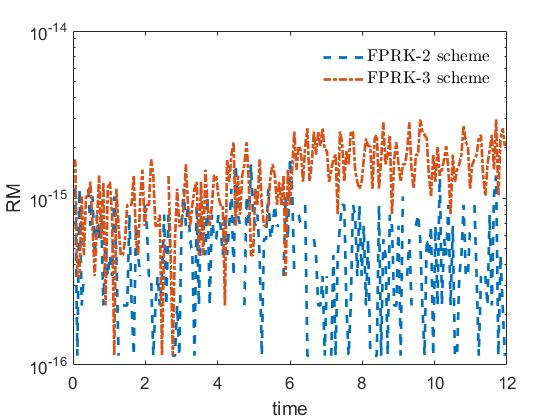}
\end{minipage}%
}%
\subfigure{
\begin{minipage}[t]{0.5\linewidth}
\centering
\includegraphics[height=4.5cm,width=6.0cm]{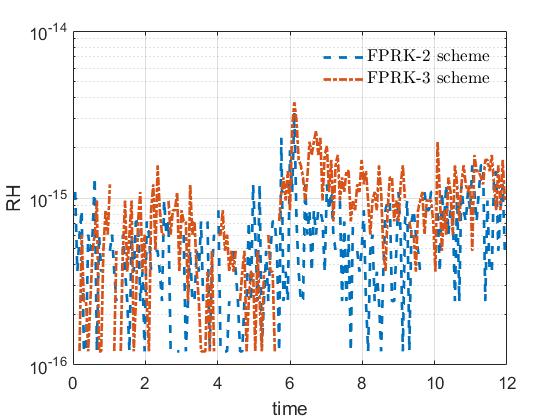}
\end{minipage}%
}%
\\
\subfigure{
\begin{minipage}[t]{0.5\linewidth}
\centering
\includegraphics[height=4.5cm,width=6.0cm]{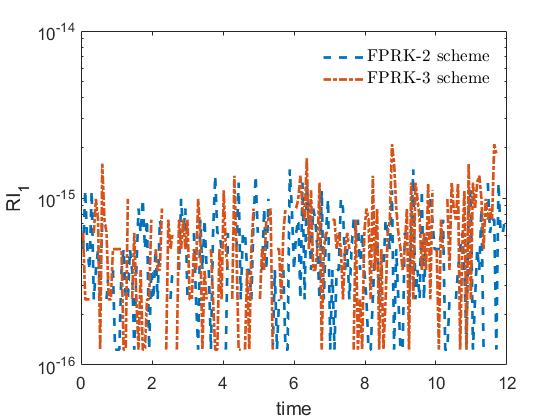}
\end{minipage}%
}%
\subfigure{
\begin{minipage}[t]{0.5\linewidth}
\centering
\includegraphics[height=4.5cm,width=6.0cm]{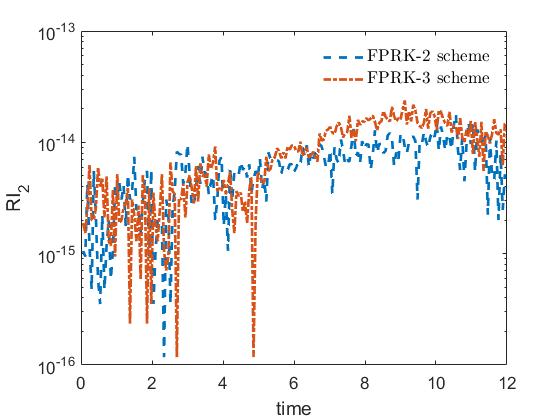}
\end{minipage}%
}%
\\
\centering
\caption{The relative residuals on the mass, Hamiltonian energy and two linear invariants over the time interval $t\in[0,12]$ under Case II.}\label{Exa2-caseII-errors}
\end{figure}

\begin{figure}[H]
\centering
\subfigure{
\begin{minipage}[t]{0.5\linewidth}
\centering
\includegraphics[height=4.5cm,width=6.0cm]{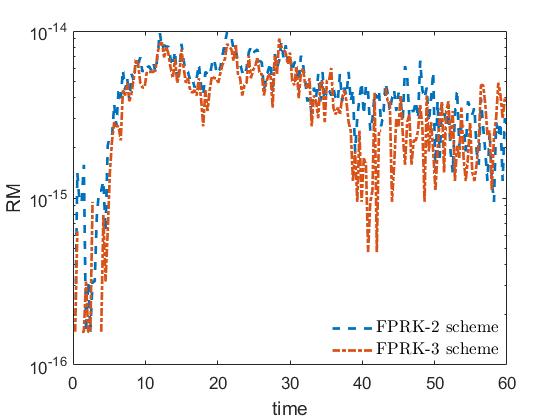}
\end{minipage}%
}%
\subfigure{
\begin{minipage}[t]{0.5\linewidth}
\centering
\includegraphics[height=4.5cm,width=6.0cm]{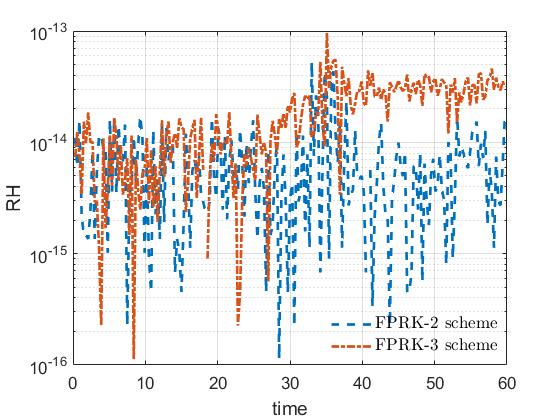}
\end{minipage}%
}%
\\
\subfigure{
\begin{minipage}[t]{0.5\linewidth}
\centering
\includegraphics[height=4.5cm,width=6.0cm]{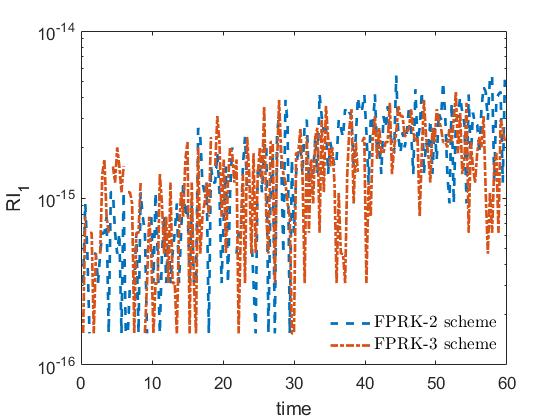}
\end{minipage}%
}%
\subfigure{
\begin{minipage}[t]{0.5\linewidth}
\centering
\includegraphics[height=4.5cm,width=6.0cm]{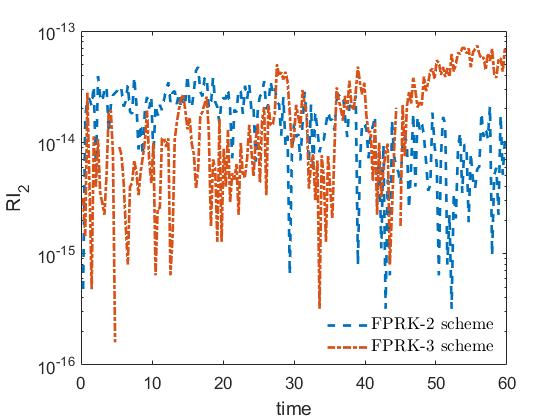}
\end{minipage}%
}%
\\
\centering
\caption{The relative residuals on the mass, Hamiltonian energy and two linear invariants over the time interval $t\in[0,60]$ under Case III.}\label{Exa2-caseIII-errors}
\end{figure}

%\bibliographystyle{plain}
%\bibliography{ref}

\end{document}